\newtheorem{proposition}{Proposition}
\newtheorem{corollary}{Corollary}
\title{\LARGE \bf
Accurate and Warm-Startable Linear Cutting-Plane Relaxations for ACOPF
}
\author{Daniel Bienstock and Mat\'ias Villagra
\thanks{This work was supported by an ARPA-E GO Competition Grant.}
 }
\begin{document}

\maketitle
\thispagestyle{empty}
\pagestyle{empty}

\begin{abstract}

We present a linear cutting-plane relaxation approach that rapidly proves tight lower bounds for the Alternating Current Optimal Power Flow Problem (ACOPF). Our method leverages outer-envelope linear cuts for  well-known second-order cone relaxations for ACOPF along with modern cut management techniques. These techniques prove effective on a broad family of ACOPF instances, including the largest ones publicly available, quickly and robustly yielding sharp bounds. Our primary focus concerns the (frequent) case where an ACOPF instance is considered following a small or moderate change in problem data, e.g., load changes and generator or branch shut-offs.  We provide significant computational evidence that the cuts computed on the prior instance provide an effective warm-start for our algorithm.


\end{abstract}

\section{INTRODUCTION}

The Alternating-Current Optimal Power Flow (ACOPF) problem~\cite{c1} is a well-known challenging computational task. It is nonlinear, non-convex and with feasible region that may be disconnected; see~\cite{c2},~\cite{c3}. From a theoretical perspective, in~\cite{c4, c5} it is shown that the feasibility problem is strongly NP-hard;  ~\cite{c6} proved that it is weakly NP-hard on star-networks. In the current state-of-the-art, some interior point methods are empirically successful at computing excellent solutions but cannot provide any bounds on solution quality.   

At the same time, strong lower bounds are available through second-order cone (SOC) relaxations~\cite{c7, c8}; however all solvers do struggle when handling such relaxations for large or even medium cases (see~\cite{c9}; we will provide additional evidence for this point). Other techniques, such as spatial-branch-and-bound methods applied to McCormick (linear) relaxations of quadratically-constrained formulations for ACOPF, tend to yield poor performance unless augmented by said SOC inequalities \textit{and} interior point methods, the latter for upper bounds.

In this paper we present a fast (linear) cutting-plane method used to obtain tight relaxations for even the largest ACOPF instances, by appropriately approximating the SOC relaxations. The emphasis on linearly constrained formulations is motivated by the fact that, whereas the tight SOC relaxations for ACOPF are clearly challenging, linear programming technology is, at this point, very mature -- many LP solvers are able to handle massively large instances quickly and robustly; these attributes extend to the case where formulations are dynamically constructed and updated, as would be the case with a cutting-plane algorithm.  As we will show herein, our approach is both fast and accurate. 
 
 Moreover, the central focus on this paper concerns \textit{reoptimization}. In power engineering practice it is often the case that a power flow problem is solved on data that reflects a recent, and likely limited, update on a case that was previously handled. In short, the current problem instance is not addressed 'from scratch.' Our algorithm can naturally operate in \textit{warm-started} mode, i.e., make use of previously computed cuts to obtain sharp bounds more rapidly than from scratch.  

As an additional attribute arising from our work the fact that our formulations are linear paves the way for effective \textit{pricing} schemes, i.e., extensions of the LBMP pricing setup currently used in energy markets~\cite{c10,c11,c12}.

\subsection{Our contributions}

\begin{itemize}
\item We describe very tight linearly constrained relaxations for ACOPF.  The relaxations can be constructed and solved robustly and quickly via a cutting-plane algorithm that relies on proper cut management. On medium to (very) large instances our algorithm is competitive or better, from scratch, with what was previously possible using nonlinear relaxations, both in terms of bound quality and solution speed.
\item We provide a theoretical justification for the tightness of the SOC relaxation for ACOPF as well as for the use of our linear relaxations.  
\item As a main contribution we demonstrate, through extensive numerical testing, that the warm-start feature for our cutting-plane algorithm yields tight bounds far faster than otherwise possible. It is worth noting that this capability stands in contrast to what is possible using nonlinear (convex) solvers (cf. Tables~\ref{table:ws_loads_1_1} and~\ref{table:ws_onelineoff}).
\end{itemize}

 
\section{ACOPF Problem Formulation and Relaxations}


\subsection{ACOPF}

We denote by $\mathcal{N} := (\mathcal{B},\mathcal{E})$ the network, where $\mathcal{B}$ denotes the set of buses, and $\mathcal{E}$ denotes the set of branches. We denote by $\mathcal{G}$ the set of generators of the grid, each of which is located at some bus;  for each bus $k \in \mathcal{B}$, we denote by $\mathcal{G}_{k} \subseteq \mathcal{G}$ the generators at bus $k$. 
 
Each bus $k$ has a fixed load $P_{k}^d + j Q_{k}^{d}$, where $P_{k}^{d} \geq 0$ is termed active power load, and  $- \infty \leq Q_{k}^{d} \leq + \infty$ is reactive power load; and lower $V_{k}^{\min} \geq 0$ and upper $V_{k}^{\max} \geq 0$ voltage limits.  For each branch $\{k,m\}$ we are given a thermal limit $0 \leq U_{km} \leq +\infty$, and maximum angle-difference $|\Delta_{km}| \leq \pi$. Thus, the goal is to find a voltage magnitude $|V_k|$ and phase angle $\theta_k$ at each bus $k$, and active $P^{g}$ and reactive $Q^g$ power generation for every generator $g$, so that power is transmitted by the network so as to satisfy active $P^{d}$ and reactive $Q^{d}$ power demands at minimum cost.  Using the so-called \textit{polar representation} we obtain the following nonlinear optimization problem:

\begin{subequations}\label{AC:firstformulation}
\begin{align} 
[\text{ACOPF}]: \hspace{5em}&\min \hspace{2em} \sum_{k \in \mathcal{G}} F_{k}(P_{k}^{g}) \label{AC:theobjective} \\
    \text{subject to:}\hspace{5em}& \nonumber\\
    \forall \, k \in \mathcal{B},\hspace{4em}& \nonumber\\
    \sum_{\{k,m\} \in \delta(k)} &P_{km} = \sum_{\ell \in \mathcal{G}_{k}} P_{\ell}^{g} - P_{k}^{d} \label{AC:activepowerbal}\\
    \sum_{\{k,m\} \in \delta(k)} &Q_{km} = \sum_{\ell \in \mathcal{G}_{k}} Q_{\ell}^{g} - Q_{k}^{d} \label{AC:reactivepowerbal}\\
    \forall \{k,m\} \in \mathcal{E},\hspace{3em}& \nonumber\\
    P_{km} = G_{kk}|V_{k}|^{2} &+ G_{km} |V_{k}| |V_{m}| \cos(\theta_{km}) \nonumber \\
    &+ B_{km} |V_{k}| |V_{m}| \sin(\theta_{km}) \label{AC:def_from_activepower} \\
    P_{mk} = G_{mm}|V_{m}|^{2} &+ |V_{k}| |V_{m}|G_{mk}\cos(\theta_{km}) \nonumber \\
    &-  B_{mk} |V_{k}| |V_{m}| \sin(\theta_{km}) \label{AC:def_to_activepower} \\
    Q_{km} = - B_{kk}|V_{k}|^{2} &+ |V_{k}| |V_{m}|B_{km}\cos(\theta_{km}) \nonumber \\ 
    &- G_{km} |V_{k}| |V_{m}| \sin(\theta_{km}) \label{AC:def_from_reactivepower}\\
    Q_{mk} = - B_{mm}|V_{m}|^{2} &+ |V_{k}| |V_{m}|B_{mk}\cos(\theta_{km}) \nonumber \\
    &+  G_{mk} |V_{k}| |V_{m}| \sin(\theta_{km}) \label{AC:def_to_reactivepower} \\
    \forall k \in \mathcal{G}: 
    P^{\min}_{k} \leq P_{k}^g \leq &P_{k}^{\max}, \, Q^{\min}_{k} \leq Q_{k}^g \leq Q_{k}^{\max}  \label{AC:genpowerlimit}\\
    \forall k \in \mathcal{B},
    \hspace{4em} &V^{\min}_{k} \leq v_{k} \leq V^{\max}_{k} \label{AC:voltlimit}\\
    \forall \{k,m\} \in \mathcal{E}, | \theta_{km} | &\leq \bar{\Delta}_{km}, \,\, \theta_{km} = \theta_{k} - \theta_{m} \label{AC:maxanglediff}\\
    \max \left\{P_{km}^{2} + Q_{km}^{2}\right. &, \left. P_{mk}^{2} + Q_{mk}^{2} \right\}  \leq U^{2}_{km} \label{AC:capacity}
\end{align}
\end{subequations}


In the above formulation, the physical parameters of each branch $\{k,m\} \in \mathcal{E}$ are described by
\begin{equation*}
    Y_{\{km\}} := \begin{pmatrix}
    G_{kk} + j B_{kk} & G_{km} + j B_{km} \\
    G_{mk} + j B_{mk} & G_{mm} + j B_{mk},
    \end{pmatrix}
\end{equation*}
which is the (complex) admittance matrix of branch $\{k,m\}$. These parameters model in~\eqref{AC:def_from_activepower}-\eqref{AC:def_to_reactivepower} physical active and reactive power flows. Inequalities~\eqref{AC:maxanglediff} -\eqref{AC:capacity} amount to flow capacity constraints, and inequalities~\eqref{AC:genpowerlimit}-\eqref{AC:voltlimit} impose operational limits on power generation and voltages. Constraints~\eqref{AC:activepowerbal}-\eqref{AC:reactivepowerbal} impose active and reactive power balance; the left-hand side represents power injection at bus $k \in \mathcal{B}$, while the right-hand side represents net power generation (generation minus demand) at bus $k$. Finally, for each generator $k \in \mathcal{G}$, it is customary to assume the functions $F_{k} : \mathbb{R} \to \mathbb{R}$ in the objective~\eqref{AC:theobjective} are convex piecewise-linear or convex quadratic. 

We remark that, often, constraint \eqref{AC:maxanglediff} is not present, and, when explicitly given, concerns angle limits $\bar{\Delta}_{km}$ that are \textit{small} (smaller than $\pi/2$). Under such assumptions there are equivalent ways to restate \eqref{AC:maxanglediff} involving the arctangent function and other variables present in the formulation (the same applies to convex relaxations).  

Please refer to the surveys~\cite{c13},~\cite{c14} for alternative, but equivalent, ACOPF formulations.

\subsection{Prior work}

We briefly review the very large literature on convex \textit{relaxations for ACOPF}. 

The simplest relaxations use, a starting point, a rectangular formulation of the ACOPF problem (rather than the polar setup described above) yielding a QCQP (quadratically constrained quadratic program) and rely on the well-known McCormick \cite{c15} reformulation to linearize bilinear expressions. This straightforward relaxation has long been known to provide very weak bounds. 

The SOC relaxation introduced in~\cite{c7}, which is widely known as the Jabr relaxation (see next subsection), has had significant impact due to its effectiveness as a lower bounding technique. While on the one hand the SOC relaxation is strong, it also yields formulations that, in the case of large ACOPF instances, are very challenging even for the best solvers. A wide variety of techniques have been proposed to strengthen the Jabr relaxation. In~\cite{c8} \emph{arctangent constraints} are associated with cycles, with the goal of capturing the relationship between the additional variables in the Jabr relaxation and phase angles -- these are formulated as bilinear constraints, and then linearized via McCormick~\cite{c15} inequalities. The two other strengthened SOC formulations proposed in~\cite{c8} add polyhedral envelopes for \emph{arctangent functions}, and dynamically generate semi-definite cuts for cycles in the network. \cite{c9} developed the Quadratic Convex (QC) relaxation, which corresponds to the Jabr relaxation strengthened with polyhedral envelopes for sine, cosine and bilinear terms appearing in the power flow definitions \eqref{AC:def_from_activepower}-\eqref{AC:def_to_reactivepower}. \cite{c16} proposes a \emph{minor-based} formulation for ACOPF (which is a reformulation of the rank-one constraints in the semidefinite programming formulation for ACOPF~\cite{c13}), which is relaxed to generate cutting-planes improving on the tightness of the Jabr relaxation. 

A semidefinite programming relaxation based on the \emph{Shor relaxation}~\cite{c17} is presented in~\cite{c18}. This formulation is at least as tight as the Jabr relaxation at the expense of even higher computational cost~\cite{c16}. 
Overall, experiments for all of these nonlinear relaxations have been limited to small and medium-sized cases, in part because SOCP solution technology may not be sufficiently mature.



Next we review linear relaxations for ACOPF. \cite{c19,c20} introduces the so-called active-power loss linear inequalities which state that on any branch the active power loss is nonnegative.  The resulting relaxation, which we term the linear-loss-relaxation, is shown to yield good lower bounds. 


In a similar same vein, \cite{c21} propose the network flow and the copper-plate relaxations. The network flow relaxation amount to the linear loss-relaxation with additional sparse linear inequalities that lower bound net reactive power losses in appropriate cases. Moreover, the copper-plate relaxation is obtained from the network flow relaxation by neglecting the power flow equations entirely via aggregation of all active and reactive power injections in the network. Along these lines \cite{c22} provide a relaxation which enforces a (valid) linear relationship between active and reactive power losses by relaxing linear combinations of \eqref{AC:def_from_activepower}-\eqref{AC:def_to_reactivepower}. The technique in ~\cite{c19} $\epsilon$-approximates the products of continuous variables (arising from the rectangular formulation of ACOPF~\cite{c13}), to arbitrary precision, using binary expansions and McCormick inequalities. This process yields a mixed integer linear $\epsilon$-approximation for ACOPF. Another linear $\epsilon$-approximation, which is based on the Jabr relaxation, is used in~\cite{c23}. Their main contribution is using the SOC linear approximation developed in~\cite{c24} which requires $O(k_{\ell} \log(1/\epsilon) )$ linear constraints and variables to $\epsilon$-approximate a conic constraint of row size $k_\ell$. Moreover, ~\cite{c25,c26} propose successive linear programming (SLP) algorithms for finding locally optimal AC solutions. One of the algorithms in~\cite{c26} is an SLP method focusing on the Jabr relaxation, and thus yielding a linear relaxation for ACOPF. We remark that the well-known Direct Current Optimal Power Flow (DCOPF) may prove a poor approximation to ACOPF in the sense that AC feasible solutions might not be $\epsilon$-feasible for DCOPF~\cite{c27} for arbitrary small $\epsilon > 0$.



We refer the reader to the  surveys~\cite{c13,c14,c28} for additional material on convex relaxations for ACOPF.

\subsection{Two Convex Relaxations for ACOPF}

\subsubsection{The Jabr SOCP}

A well-known convex relaxation of ACOPF is the \emph{Jabr relaxation}~\cite{c7}. It linearizes the power flow definitions \eqref{AC:def_from_activepower}-\eqref{AC:def_to_reactivepower} using $|\mathcal{B}| + 2 |\mathcal{E}|$ additional variables and adds $|\mathcal{E}|$ rotated-cone inequalities. A simple derivation is as follows: For any line $\{k,m\} \in \mathcal{E}$, we define $v_{k}^{(2)} := |V_{k}|^2$, $c_{km} := |V_{k}| |V_{m}| \cos(\theta_{k} - \theta_{m})$, $s_{km} := |V_{k}| |V_{m}| \sin(\theta_{k} - \theta_{m})$. Clearly we have the following valid (non-convex) quadratic relation
\begin{equation}
    c_{km}^{2} + s_{km}^{2} = v_{k}^{(2)} v_{m}^{(2)},
\end{equation}
which in~\cite{c7} is relaxed into the (convex) inequality
\begin{equation}\label{jabrineq}
    c_{km}^{2} + s_{km}^{2} \leq v_{k}^{(2)} v_{m}^{(2)}.
\end{equation}
This is a rotated-cone inequality hence it can be represented as a second-order cone constraint. Note that the newly defined variables $v_{k}^{(2)}$, $c_{km}$, and $s_{km}$ can be used to represent the power flow equations  in \eqref{AC:def_from_activepower}-\eqref{AC:def_to_reactivepower} as, $\forall \{k,m\} \in \mathcal{E}$,
\begin{subequations} \label{JABR:flows}
\begin{align}
    P_{km} &= G_{kk}v_{k}^{(2)} + G_{km}c_{km} +  B_{km} s_{km} \label{SOCP:def_to_activepower}\\
    P_{mk} &= G_{mm}v_{m}^{(2)} + G_{mk}c_{km} -  B_{mk} s_{km} \label{SOCP:def_from_activepower} \\
    Q_{km} &= - B_{kk}v_{k}^{(2)} + B_{km}c_{km} -  G_{km} s_{km} \label{SOCP:def_to_reactivepower}\\
    Q_{mk} &= - B_{mm}v_{m}^{(2)} + B_{mk} c_{km} +  G_{mk} s_{km}  \label{SOCP:def_from_reactivepower}
\end{align}
\end{subequations} 
\noindent In summary, the Jabr relaxation can be obtained from the formulation \eqref{AC:firstformulation} by (i) adding the $c_{km}, s_{km}, v^{(2)}_k$ variables,
(ii) replacing \eqref{AC:def_from_activepower}-\eqref{AC:def_to_activepower} with \eqref{JABR:flows}, and (iii) adding constraint \eqref{jabrineq}.\footnote{We stress that the definitions of $v_{k}^{(2)}$, $c_{km}$, and $s_{km}$ are \textit{not} added.}

\subsubsection{The i2 SOCP}

Recall that complex power injected into branch $\{k,m\} \in \mathcal{E}$ at bus $k \in \mathcal{B}$ is defined by
\begin{equation*}
    S_{km} := V_{k} I^{*}_{km},
\end{equation*}
hence, $|S_{km}|^{2} = |V_{k}|^{2} | I_{km} |^{2}$ holds. Moreover, since complex power can be decomposed into active and reactive power as $S_{km} = P_{km} + j Q_{km}$, and recalling that $v_{k}^{(2)} := |V_{k}|^{2}$ while denoting $i_{km}^{(2)} := |I_{km}|^{2}$, we have
\begin{equation}\label{fundamentalpowereq}
    P_{km}^{2} + Q_{km}^{2} = v_{k}^{(2)} i_{km}^{(2)}.
\end{equation}
By relaxing the equality \eqref{fundamentalpowereq} we obtain the rotated-cone inequality~\cite{c9,c29}
\begin{equation}\label{i2ineq}
    P_{km}^{2} + Q_{km}^{2} \leq v_{k}^{(2)} i^{(2)}_{km}.
\end{equation}
Since the variable $i_{km}^{(2)}$ can be defined linearly in terms of $v_{k}^{(2)}$, $v_{m}^{(2)}$, $c_{km}$, and $s_{km}$, we obtain an alternative SOC relaxation. This formulation, which we call the \emph{i2 relaxation}, is comprised by \eqref{AC:theobjective}-\eqref{AC:reactivepowerbal}, \eqref{JABR:flows}, the linear definition of $i_{km}^{(2)}$~\eqref{appendix:i2def_eq} and the rotated-cone inequalities~\eqref{i2ineq}, c.f. \ref{appendix:i2def}.

It is known~\cite{c30,c9,c31} that the systems defined by each branch's $\{k,m\}$ linearized power flows \eqref{JABR:flows} with its corresponding Jabr inequality~\eqref{jabrineq}, and on the other hand, branch's $\{k,m\}$ linearized power flows \eqref{JABR:flows} with the rotated-cone inequality~\eqref{i2ineq} and the linear definition of $i^{(2)}_{km}$~\ref{appendix:i2def}, are equivalent. In other words, for each feasible solution to one system there is a feasible solution to the other one. It must be noted though that in terms of the \emph{complete} formulations, equivalence always holds true if $i^{(2)}$ is not upper bounded.

\begin{proposition}\label{prop:i2_stronger_than_jabr}
    The Jabr and the i2 relaxations are equivalent if $i^{(2)}$ is not upper bounded, and otherwise the i2 relaxation can be strictly stronger.
\end{proposition}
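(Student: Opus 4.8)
The plan is to reduce the statement to a single branch-wise polynomial identity and then treat the two regimes in turn. First I would record the exact form of the (well-known) branch relationship behind \cite{c30,c9,c31}: after substituting \eqref{JABR:flows} for $P_{km},Q_{km}$ and the linear expression \eqref{appendix:i2def_eq} for $i^{(2)}_{km}$, one has, identically in the free variables $v^{(2)}_k,v^{(2)}_m,c_{km},s_{km}$ and for every $\{k,m\}\in\mathcal{E}$,
\begin{equation*}
v^{(2)}_k\, i^{(2)}_{km} \;-\; \left( P_{km}^2 + Q_{km}^2 \right) \;=\; \kappa_{km}\left( v^{(2)}_k v^{(2)}_m - c_{km}^2 - s_{km}^2 \right),
\end{equation*}
where $\kappa_{km} = G_{km}^2 + B_{km}^2$ is the squared magnitude of the off-diagonal admittance of the branch, so $\kappa_{km}>0$ for every branch with nonzero series admittance. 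The derivation is short: $P_{km},Q_{km},i^{(2)}_{km}$ are specific affine functions of $(v^{(2)}_k,v^{(2)}_m,c_{km},s_{km})$; the quadratic $v^{(2)}_k i^{(2)}_{km}-(P_{km}^2+Q_{km}^2)$ vanishes on the quadric $\{v^{(2)}_k v^{(2)}_m = c_{km}^2+s_{km}^2\}$ (where these quantities can be realized by actual voltages, so that the physical identity $|S_{km}|^2 = |V_k|^2|I_{km}|^2$ applies), hence, being a quadratic that vanishes on the zero set of an irreducible quadratic, it is a scalar multiple of that defining polynomial; evaluating at $c_{km}=s_{km}=0$, $v^{(2)}_k=v^{(2)}_m=1$ pins the scalar to $\kappa_{km}$.

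For the case where $i^{(2)}$ has no upper bound, I would prove equivalence by comparing feasible regions after projecting out $i^{(2)}$. The two relaxations share every constraint except that the i2 model additionally carries the variables $i^{(2)}_{km}$, their linear definition, and \eqref{i2ineq} in place of \eqref{jabrineq}. Given a Jabr-feasible point, define each $i^{(2)}_{km}$ through its linear definition: the identity together with \eqref{jabrineq} yields \eqref{i2ineq}, and since $i^{(2)}$ is unbounded above nothing else is affected, so the point lifts to an i2-feasible point with the same objective. Conversely, given an i2-feasible point, the identity and \eqref{i2ineq} give $\kappa_{km}(v^{(2)}_k v^{(2)}_m - c_{km}^2 - s_{km}^2)\ge 0$, hence \eqref{jabrineq} because $\kappa_{km}>0$; the corner $v^{(2)}_k=0$ forces $P_{km}=Q_{km}=0$ and then $c_{km}=s_{km}=0$, so \eqref{jabrineq} still holds (and $V^{\min}_k>0$ rules this corner out in practice). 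Thus the projected feasible sets coincide and the optimal values are equal.

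For the bounded case, I would first observe that imposing $i^{(2)}_{km}\le \bar{\imath}_{km}$ — for instance the implied bound $\bar{\imath}_{km} = U_{km}^2/(V^{\min}_k)^2$ coming from \eqref{AC:capacity} and \eqref{AC:voltlimit}, or an explicit current rating — only shrinks the i2 feasible region, so by the previous paragraph $\mathrm{opt}(\mathrm{i2})\ge \mathrm{opt}(\mathrm{Jabr})$ always. To show the inequality can be strict I would exhibit a minimal instance: two buses joined by a single lossy branch, a cheap generator at one bus and the load at the other, with branch impedance, voltage window and $\bar{\imath}_{km}$ chosen so that the Jabr optimum drives a branch current strictly above $\bar{\imath}_{km}$; imposing the bound then forbids that operating point and forces a strictly more expensive dispatch. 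Verifying the example requires only two small convex programs (or a short closed-form computation on the two-bus network).

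The crux of the argument — and the only step requiring genuine work rather than bookkeeping — is this last one: choosing the small instance so that the current bound is actually binding at the Jabr optimum and moves the objective, rather than merely trimming a non-optimal part of the feasible set. A secondary nuisance is the degenerate treatment of branches with zero series admittance ($\kappa_{km}=0$) and of $v^{(2)}_k=0$, each of which is handled by a one-line remark.
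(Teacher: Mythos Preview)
Your proposal is correct and takes a genuinely different route from the paper. For the equivalence half, the paper simply cites~\cite{c9}; you instead re-derive the branch-wise identity $v^{(2)}_k i^{(2)}_{km}-(P_{km}^2+Q_{km}^2)=\kappa_{km}(v^{(2)}_k v^{(2)}_m-c_{km}^2-s_{km}^2)$ and use it to match the projected feasible sets, handling the $v^{(2)}_k=0$ and $\kappa_{km}=0$ corner cases explicitly. This is more self-contained and makes the mechanism transparent, at the cost of some algebra the paper outsources. For the strictness half, the paper does not build an example analytically: it points to the real-world instance \texttt{case1354pegase}, reports that the Jabr and i2 optima differ numerically ($74009.28$ vs.\ $74013.68$), and exhibits a solver-generated Irreducible Inconsistent Subsystem showing that the Jabr optimum violates the $i^{(2)}$ upper bound on a specific branch. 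Your plan of constructing a minimal two-bus instance with a binding current limit is cleaner and would yield a closed-form certificate rather than a numerical one, but you correctly flag that this is the only step requiring real work and you have not yet carried it out; the paper's approach trades that work for reliance on a solver and a benchmark library. Either route establishes the proposition.
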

\begin{proof}
    Sufficiency was proven in~\cite{c9}. For an example where the i2 relaxation is strictly stronger than the Jabr relaxation see Appendix~\ref{appendix:i2strongerjabr}.
\end{proof}

Our computational experiments corroborate this fact; we have found that linear outer-approximation cuts for the rotated-cone inequalities \eqref{jabrineq} and \eqref{i2ineq} have significantly different impact in lower bounding ACOPF (c.f. \ref{subsection:basicalgorithm}).



\section{Our work}

In this paper we use a dynamically generated linearly-constrained relaxation as a lower bounding procedure for ACOPF. We introduce a few concepts from the integer programming community. 

Given a set $X$ in $\mathbb{R}^{n}$, we say that a convex inequality $g(x) \leq d$ is \emph{valid} for $X$ if for every $x$ in $X$ $g(x) \leq d$ holds. For a set $R$ in $\mathbb{R}^{n}$, usually $R \supseteq X$ is a relaxation of $X$, we say that $c^{\top} x \leq d$ is a (linear) \emph{cut} for $X$ (relative to $R$) if the inequality is valid for $X$ but not for $R$. A (linear) cutting-plane algorithm~\cite{c32} for a set $X$ is an iterative procedure in which, starting from an initial (linear) relaxation, in every round (linear) cuts are added to outer-approximate a set $X$. In general, these cuts are computed using a solution $\overline{x}$ to the relaxation at the current round and valid inequalities which are violated by $\overline{x}$. In this paper, our target set $X$ is the i2 relaxation of ACOPF.

To justify the use of our methodology we note that direct solution of the Jabr and i2 relaxations of ACOPF, for large instances, is computationally prohibitive and often results in non-convergence (c.f. Tables~\ref{table:jabrsocps},~~\ref{table:i2socps},~\ref{table:ws_loads_1_1} and~\ref{table:ws_onelineoff}). Empirical evidence further shows that outer-approximation of the rotated-cone inequalities (in either case) requires a large number of cuts in order to achieve a tight relaxation value. Moreover, employing such large families of cuts yields a relaxation that, while linearly constrained, still proves challenging -- both from the perspective of running time and numerical tractability. Nonetheless, a characteristic feature of our iterative procedure is its robustness to potentially suboptimal termination of the oracle used to solve the LPs or convex QPs; \emph{independent} of the quality of the primal solution obtained, our linear cuts will always be valid.

However, as we show, adequate cut management proves successful, yielding a procedure that is (a) rapid, (b) numerically stable, and (c) constitutes a very tight relaxation (c.f. Tables~\ref{table:ws_loads_1_1} and~\ref{table:ws_onelineoff}).  The critical ingredients in this procedure are: (1) quick cut separation; (2) appropriate violated cut selection; and (3) effective dynamic cut management, including rejection of \textit{nearly-parallel} cuts and removal of \textit{expired} cuts, i.e., previously added cuts that are slack (cf.~\ref{subsection:basicalgorithm}).

Our procedure possesses efficient warm-starting capabilities -- this is a central goal of our work. Previously computed cuts, for some given instance, can be re utilized and loaded into new runs of \emph{related} instances, hence leveraging previous computational effort. It is worth noting that this reoptimization feature stands in sharp contrast to what is possible using nonlinear (convex) solvers. In~\ref{subsubsection:experiments_warmstarts} we justify the validity of this feature and Tables~\ref{table:ws_loads_1_1} and~\ref{table:ws_onelineoff} summarize extensive numerical evidence on its performance relative to solving the SOCPs `from scratch'. We remark that adequate cut management is what makes possible this feature for large ACOPF instances.

\subsection{Cuts}\label{subsection:cuts}

In this subsection we present a theoretical justification for using an outer-approximation cutting-plane framework on the Jabr and i2 relaxations, as well as computationally efficient cut separation procedures. We also give brief intuition on the complementarity of the Jabr and i2 outer-envelope cuts.

\subsubsection{Losses and Outer-Envelope Cuts}
For transmission lines with $G_{kk} > 0 > G_{km} = G_{mk} \geq - G_{kk}$ and $B_{km} = B_{mk}$, in particular lines with no transformer nor shunt elements, active-power loss inequalities are implied by the Jabr inequalities, and also by the definition of the $i^{(2)}$ variable. We remark that if negative losses are present, then total generation is smaller than total loads -- effectively, the negative losses amount to a source of free generation and directly contribute to a lower objective value for ACOPF that should not be feasible. A detailed discussion on losses and ACOPF relaxations is given in~\cite{bienstock15} and~\cite{cc}. We begin with two simple technical observations.

First, consider a (generic) rotated cone inequality 
\begin{equation}\label{eq:genericrotated}
x^{2} + y^{2} \leq wz,
\end{equation}
which is equivalent to $(2x)^{2} + (2y)^{2} \leq (w+z)^{2} - (w-z)^{2}$. Hence,
\begin{equation}
    x^{2} + y^{2} \leq wz \iff ||(2x,2y,w-z)^{\top}||_{2} \leq w+z. \label{eq:rotatedrewrite}
\end{equation}
Next, let $\lambda \in \mathbb{R}^3$ satisfy $|| \lambda||_2 = 1$.  Then, by \eqref{eq:rotatedrewrite},
\begin{align}
(2x,2y,w-z) \lambda &\leq 
||(2x,2y,w-z)^{\top}||_{2} \, || \lambda||_2 \nonumber \\
&\leq w+z. \label{eq:genericouter}
\end{align}
Inequality \eqref{eq:genericouter} provides a generic recipe to obtain outer-envelope inequalities for the rotated cone \eqref{eq:genericrotated}. As a result of these developments,  we have:
\begin{proposition}\label{prop:jabrouter}
    For a transmission line $\{k,m\} \in \mathcal{E}$ with $G_{kk} > 0 > G_{km} = G_{mk} \geq - G_{kk}$ and $B_{km} = B_{mk}$, the Jabr inequality $c_{km}^{2} + s_{km}^{2} \leq v_{k}^{(2)} v_{m}^{(2)}$ implies, as an outer envelope approximation inequality, that $P_{km} + P_{mk} \geq 0$.
\end{proposition}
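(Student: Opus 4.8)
The plan is to realize the inequality $P_{km}+P_{mk}\ge 0$ as one particular member of the generic outer-envelope family \eqref{eq:genericouter} applied to the Jabr cone, followed by a positive rescaling.

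First I would add the expressions for $P_{km}$ and $P_{mk}$ in \eqref{JABR:flows}. Invoking the hypotheses $G_{km}=G_{mk}$ and $B_{km}=B_{mk}$, the $s_{km}$ contributions cancel and the $c_{km}$ coefficients combine, yielding the identity $P_{km}+P_{mk}=G_{kk}v_k^{(2)}+G_{mm}v_m^{(2)}+2G_{km}c_{km}$; for a branch with neither transformer nor shunt one further has $G_{mm}=G_{kk}$, so the right-hand side equals $G_{kk}\big(v_k^{(2)}+v_m^{(2)}+\tfrac{2G_{km}}{G_{kk}}c_{km}\big)$. Thus it suffices to produce, from the Jabr inequality, a valid inequality proportional to this expression with a strictly positive factor.

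Next I would specialize \eqref{eq:genericouter} to $(x,y,w,z)=(c_{km},s_{km},v_k^{(2)},v_m^{(2)})$: for every $\lambda\in\mathbb{R}^3$ with $\|\lambda\|_2\le 1$ (which is enough, since $v_k^{(2)},v_m^{(2)}\ge 0$ lets the cone bound \eqref{eq:rotatedrewrite} feed directly into Cauchy--Schwarz), one obtains $2\lambda_1 c_{km}+2\lambda_2 s_{km}+\lambda_3(v_k^{(2)}-v_m^{(2)})\le v_k^{(2)}+v_m^{(2)}$. I would then take $\lambda_2=0$ together with $\lambda_1=-2G_{km}/(G_{kk}+G_{mm})>0$ and $\lambda_3=(G_{mm}-G_{kk})/(G_{kk}+G_{mm})$; in the no-transformer/no-shunt subcase this is simply $\lambda=(-G_{km}/G_{kk},\,0,\,0)$. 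With this choice the displayed inequality rearranges to $(1-\lambda_3)v_k^{(2)}+(1+\lambda_3)v_m^{(2)}-2\lambda_1 c_{km}\ge 0$, and multiplying through by the positive constant $(G_{kk}+G_{mm})/2$ recovers exactly $G_{kk}v_k^{(2)}+G_{mm}v_m^{(2)}+2G_{km}c_{km}\ge 0$, i.e. $P_{km}+P_{mk}\ge 0$.

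The one point requiring care is admissibility of this $\lambda$, i.e. $\|\lambda\|_2\le 1$. A short computation gives $\lambda_1^2+\lambda_3^2=\big(4G_{km}^2+(G_{mm}-G_{kk})^2\big)/(G_{kk}+G_{mm})^2$, so the condition is equivalent to $G_{km}^2\le G_{kk}G_{mm}$. In the no-transformer/no-shunt case $G_{kk}=G_{mm}$ and this reduces precisely to the stated bound $G_{km}\ge -G_{kk}$; for a physical series branch (including an in-phase transformer) one in fact has $G_{km}^2=G_{kk}G_{mm}$, so $\lambda$ lies on the unit sphere and \eqref{eq:genericouter} applies verbatim. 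I expect this admissibility check to be the whole substance of the argument, everything else being a substitution and a rescaling. An entirely analogous derivation, starting from the i2 inequality \eqref{i2ineq} and the linear definition of $i^{(2)}_{km}$ in place of the Jabr inequality, should give the corresponding nonnegative-loss statement in the i2 setting.
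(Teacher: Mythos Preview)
Your argument is correct and uses the same device as the paper's proof: derive $P_{km}+P_{mk}\ge 0$ as an instance of the outer-envelope recipe \eqref{eq:genericouter} applied to the Jabr rotated cone. The execution differs slightly. The paper fixes $\lambda=(1,0,0)^\top$ to obtain the single inequality $v_k^{(2)}+v_m^{(2)}-2c_{km}\ge 0$ and then lower-bounds $G_{kk}v_k^{(2)}+G_{mm}v_m^{(2)}+2G_{km}c_{km}$ by $\min\{G_{kk},G_{mm}\}\bigl(v_k^{(2)}+v_m^{(2)}-2c_{km}\bigr)$. You instead choose $\lambda$ so that, after multiplying by $(G_{kk}+G_{mm})/2$, the envelope inequality \emph{is} $P_{km}+P_{mk}\ge 0$ on the nose; in the canonical case $G_{kk}=G_{mm}=-G_{km}$ the two choices coincide. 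Your route is marginally more general (it goes through whenever $G_{km}^2\le G_{kk}G_{mm}$, hence also for in-phase transformers) and avoids any implicit sign assumption on $c_{km}$ in the intermediate bounding step; the paper's route is a line shorter.
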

\begin{proof}
    See Appendix~\ref{appendix:jabrouter}.
\end{proof}

Moreover, it is known that for transmission lines with no transformers nor shunt elements the definition of the variable $i^{(2)}$ implies the active-power loss inequalities~\cite{c30,c31}.

 
\subsubsection{Two Simple Cut Procedures}\label{subsubsection:separation}
The following proposition and corollary give us an inexpensive computational procedure for separating the rotated-cone inequalities
\begin{equation}\label{rotatedconeinequalities}
    c_{km}^{2} + s_{km}^{2} \leq v_{k}^{(2)} v_{m}^{(2)}, \hspace{2em} P_{km}^{2} + Q_{km}^{2} \leq v_{k}^{(2)} i_{km}^{(2)}.
\end{equation}

\begin{proposition}\label{proposition:projection}
    Consider the second-order cone $C := \{(x,s) \in \mathbb{R}^{n} \times \mathbb{R}_{+} \, : \, ||x||_{2} \leq s\}$. Suppose $(\overline{x},\overline{s}) \notin C$ with $\overline{s} > 0$. 
    Then the cut for $C$ which achieves the maximum violation by $(\overline{x},\overline{s})$ is given by $\overline{x}^{\top} x  \leq s || \overline{x} ||$.
\end{proposition}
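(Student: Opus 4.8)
The plan is to exploit the separating-hyperplane structure of the second-order cone and reduce the problem to a one-dimensional optimization over the choice of normal vector. Recall from the discussion preceding the proposition (the generic outer-envelope recipe) that every valid linear cut for $C$ arises from choosing a unit vector and using Cauchy--Schwarz; concretely, for any $\lambda \in \mathbb{R}^n$ with $\|\lambda\|_2 = 1$ we have $\lambda^\top x \leq \|x\|_2 \leq s$ for all $(x,s) \in C$, so $\lambda^\top x \leq s$ is valid. I would first argue that, up to scaling, these are \emph{all} the cuts worth considering: a linear inequality $a^\top x \leq b\, s$ (with the coefficient of $s$ necessarily nonnegative for validity when $x = 0$, $s \geq 0$) is valid for $C$ if and only if $\|a\|_2 \leq b$; this follows because $\max_{\|x\|_2 \leq s} a^\top x = \|a\|_2\, s$. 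Hence the ``tightest'' version of any such cut is obtained by rescaling to $b = \|a\|_2$, i.e. $a^\top x \leq \|a\|_2\, s$, and without loss of generality $\|a\|_2 = 1$.

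Next I would set up the violation to be maximized. Given the infeasible point $(\bar x, \bar s)$ with $\bar s > 0$, the violation of the candidate cut $\lambda^\top x \leq s$ (with $\|\lambda\|_2 = 1$) at $(\bar x, \bar s)$ is $\lambda^\top \bar x - \bar s$. Maximizing over the unit sphere $\{\lambda : \|\lambda\|_2 = 1\}$, the term $-\bar s$ is constant, so the problem reduces to $\max_{\|\lambda\|_2 = 1} \lambda^\top \bar x$, whose optimum is $\|\bar x\|_2$, attained uniquely (since $\bar x \neq 0$, which holds because $(\bar x, \bar s) \notin C$ with $\bar s > 0$ forces $\|\bar x\|_2 > \bar s > 0$) at $\lambda^\ast = \bar x / \|\bar x\|_2$. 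Substituting back, the maximally violated cut is $(\bar x / \|\bar x\|_2)^\top x \leq s$, which upon clearing the positive denominator is exactly $\bar x^\top x \leq s\, \|\bar x\|_2$, as claimed. I should also note the cut is genuinely violated: its violation equals $\|\bar x\|_2 - \bar s > 0$.

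The main subtlety — and the one place a careful word is needed — is pinning down what ``the cut which achieves the maximum violation'' means, so that the maximization is over a well-defined normalized family rather than an unbounded one (any valid cut can be scaled by an arbitrary positive constant, inflating its apparent violation). The clean way to handle this is the normalization argument in the first paragraph: restrict attention to cuts of the form $a^\top x \leq b\, s$ with $\|(a,b)\|$ fixed, or equivalently to the canonical representatives with $\|a\|_2 = 1$; with this convention the argument above is immediate. An alternative, and perhaps more transparent, route is to measure violation by Euclidean distance from $(\bar x, \bar s)$ to the cut's hyperplane and invoke the fact that the nearest point of $C$ to $(\bar x,\bar s)$ lies on the boundary with supporting hyperplane normal $\bar x/\|\bar x\|_2$ in the $x$-block — but the normalization argument is shorter and self-contained, so that is the one I would write up.
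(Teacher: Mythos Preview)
Your proof is correct, but it takes a genuinely different route from the paper's. The paper interprets ``maximum violation'' via Euclidean projection: it sets up the QCQP $\min \tfrac{1}{2}\|(x,s)-(\bar x,\bar s)\|^{2}$ subject to $\|x\|^{2}\le s^{2}$, solves the KKT system explicitly to obtain the projection $(x_{0},s_{0}) = \tfrac{\|\bar x\|+\bar s}{2}\bigl(\tfrac{\bar x}{\|\bar x\|},1\bigr)$, and reads off the supporting hyperplane from the normal $(\bar x,\bar s)-(x_{0},s_{0})\propto(\bar x/\|\bar x\|,-1)$. You instead characterize \emph{all} valid homogeneous cuts $a^{\top}x\le b\,s$ for the cone (valid iff $\|a\|_{2}\le b$), normalize, and maximize the slack $\lambda^{\top}\bar x-\bar s$ over the unit sphere by Cauchy--Schwarz. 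Your argument is shorter and avoids Lagrangians entirely; the paper's argument, in exchange, delivers the projection point itself as a by-product and makes the ``maximum violation $=$ Euclidean distance to $C$'' interpretation explicit, which is arguably the more canonical reading of the phrase. Amusingly, you mention the projection route as an alternative and dismiss it as longer --- that is exactly the route the paper takes.
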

\begin{proof}
    See Appendix~\ref{appendix:proofprojection}.
\end{proof}

\begin{corollary}\label{cor:cuts}
    Let $C := \{ (x,y,w,z) \in \mathbb{R}^{2} \times \mathbb{R}^{2}_{+} \, : \, x^{2} + y^{2} \leq wz \} \subseteq \mathbb{R}^{4}$ and suppose that $(\overline{x},\overline{y},\overline{w},\overline{z}) \notin C$ where $\overline{w}+\overline{z}>0$. The cut which achieves the maximum violation by $(\overline{x},\overline{y},\overline{w},\overline{z})$ is given by
    \begin{align}
        (4\overline{x})^{\top}x + (4\overline{y})^{\top}y &+ ((\overline{w}-\overline{z})-n_{0})^{\top} w \nonumber \\
        &+ (-(\overline{w}-\overline{z})-n_{0})^{\top}z \leq 0,
    \end{align}
    where $n_{0} := ||(2\overline{x},2\overline{y},\overline{w}-\overline{z})^{\top}||$.
\end{corollary}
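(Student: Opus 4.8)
The plan is to reduce the statement directly to Proposition~\ref{proposition:projection} by means of the algebraic rewriting in~\eqref{eq:rotatedrewrite}. First I would introduce the variables $\xi := (2x,2y,w-z)^{\top}\in\mathbb{R}^{3}$ and $\sigma := w+z\in\mathbb{R}_{+}$, and record that, by~\eqref{eq:rotatedrewrite}, the linear map $(x,y,w,z)\mapsto(\xi,\sigma)$ is a bijection of $\mathbb{R}^{4}$ carrying $C=\{(x,y,w,z)\in\mathbb{R}^{2}\times\mathbb{R}^{2}_{+}:x^{2}+y^{2}\le wz\}$ onto the standard second-order cone $\mathcal{C}:=\{(\xi,\sigma)\in\mathbb{R}^{3}\times\mathbb{R}_{+}:\|\xi\|_{2}\le\sigma\}$ of Proposition~\ref{proposition:projection} with $n=3$ (the constraints $w,z\ge 0$ correspond to $|\xi_{3}|\le\sigma$, which is implied by $\|\xi\|_{2}\le\sigma$). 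Setting $\bar\xi:=(2\bar x,2\bar y,\bar w-\bar z)^{\top}$ and $\bar\sigma:=\bar w+\bar z$, the hypotheses $(\bar x,\bar y,\bar w,\bar z)\notin C$ and $\bar w+\bar z>0$ say exactly that $(\bar\xi,\bar\sigma)\notin\mathcal{C}$ with $\bar\sigma>0$, so Proposition~\ref{proposition:projection} is applicable.

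Next I would invoke Proposition~\ref{proposition:projection} in the $(\xi,\sigma)$ coordinates: the maximum-violation cut for $\mathcal{C}$ at $(\bar\xi,\bar\sigma)$ is $\bar\xi^{\top}\xi\le\sigma\,\|\bar\xi\|_{2}$. Writing $n_{0}:=\|\bar\xi\|_{2}=\|(2\bar x,2\bar y,\bar w-\bar z)^{\top}\|$, and substituting $\bar\xi^{\top}\xi = 4\bar x x + 4\bar y y + (\bar w-\bar z)(w-z)$ together with $\sigma=w+z$, this cut reads $4\bar x x + 4\bar y y + (\bar w-\bar z)(w-z)\le (w+z)\,n_{0}$. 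Transposing the right-hand side and collecting the coefficients of $w$ and $z$ gives $(4\bar x)x+(4\bar y)y+((\bar w-\bar z)-n_{0})w+(-(\bar w-\bar z)-n_{0})z\le 0$, which is precisely the asserted inequality (the transposes in the statement being vacuous, since $x,y,w,z$ are scalars). This is a one-line computation, so I expect no difficulty here.

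The single point that requires care is the meaning of ``achieves the maximum violation.'' Proposition~\ref{proposition:projection} certifies maximality of the cut $\bar\xi^{\top}\xi\le\sigma\|\bar\xi\|_{2}$ for $\mathcal{C}$ among (suitably normalized) valid cuts in the $(\xi,\sigma)$ variables, and since $(x,y,w,z)\mapsto(\xi,\sigma)$ is a bijection, validity for $C$ and violation at $(\bar x,\bar y,\bar w,\bar z)$ transfer verbatim. What must be stated explicitly is that optimality for $C$ is understood with respect to the second-order-cone representation~\eqref{eq:rotatedrewrite}, i.e.\ in the $(\xi,\sigma)$ coordinates, because this change of variables is not orthogonal and hence does not preserve the Euclidean normalization used to compare violations. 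I would therefore phrase the corollary's optimality in that sense and deduce it immediately from Proposition~\ref{proposition:projection}; everything else is the substitution above. The main obstacle is thus not the algebra but fixing this normalization convention precisely enough that the transferred optimality claim is unambiguous.
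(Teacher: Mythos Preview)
Your proposal is correct and follows essentially the same approach as the paper: rewrite the rotated cone via~\eqref{eq:rotatedrewrite} and apply Proposition~\ref{proposition:projection} directly. The paper's own proof is a single sentence to this effect, so your write-up is in fact more detailed; your remark about the non-orthogonality of the change of variables and the resulting normalization convention for ``maximum violation'' is a point the paper leaves implicit.
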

\begin{proof}
     Rewriting the rotated-cone inequality as \eqref{eq:rotatedrewrite} and a direct application of Proposition~\ref{proposition:projection} gives us the desired separating hyperplane.
\end{proof}

Finally, we present a proposition which gives us a simple procedure for computing linear cuts for violated thermal-limit inequalities 
\begin{equation}\label{thermallimit}
    P_{km}^{2} + Q_{km}^{2} \leq U_{km}^{2}.
\end{equation}

\begin{proposition}\label{prop:thermalcuts}
    Consider the Euclidean ball in $\mathbb{R}^{2}$ of radius $r$, $S_{r}:= \{ (x,y) \in \mathbb{R}^{2} \, : \, x^{2} + y^{2} \leq r^{2}\}$, and let $(\overline{x},\overline{y}) \notin S_{r}$. Then the cut that attains the maximum violation by $(\overline{x},\overline{y})$ is given by
    \begin{equation}
        (\overline{x})^{\top}x + (\overline{y})^{\top}y \leq r ||(\overline{x},\overline{y})^{\top}||.
    \end{equation}
\end{proposition}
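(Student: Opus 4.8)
The plan is to reduce this to the structure already developed in the preceding propositions, since \eqref{thermallimit} is exactly the condition that $(P_{km}, Q_{km})$ lies in the Euclidean ball of radius $U_{km}$. Concretely, I would observe that the ball $S_r$ is a trivial special case of the second-order cone of Proposition~\ref{proposition:projection}: writing $x = (x,y) \in \mathbb{R}^2$ and fixing the scalar coordinate to the constant $s = r$, membership in $S_r$ is precisely $\|(x,y)^\top\|_2 \le r$. So the separation problem for $S_r$ is the same as the separation problem for $C$ with the $s$-coordinate frozen, and we can read off the answer directly.

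First I would record the elementary fact that, for $(\overline{x},\overline{y}) \notin S_r$, we have $r \ge 0$ and $\|(\overline{x},\overline{y})^\top\| > r \ge 0$, so in particular $\|(\overline{x},\overline{y})^\top\| > 0$ and the stated inequality is well-defined (the right-hand side is genuine, and the claimed cut is nontrivial). Next I would verify validity: for any $(x,y) \in S_r$, Cauchy--Schwarz gives
\begin{equation*}
\overline{x}^\top x + \overline{y}^\top y \;\le\; \|(\overline{x},\overline{y})^\top\|\,\|(x,y)^\top\| \;\le\; r\,\|(\overline{x},\overline{y})^\top\|,
\end{equation*}
so the inequality holds on $S_r$. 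Then I would check that it is violated by $(\overline{x},\overline{y})$: the left-hand side evaluates to $\|(\overline{x},\overline{y})^\top\|^2$, and since $\|(\overline{x},\overline{y})^\top\| > r$, multiplying by the positive quantity $\|(\overline{x},\overline{y})^\top\|$ gives $\|(\overline{x},\overline{y})^\top\|^2 > r\,\|(\overline{x},\overline{y})^\top\|$, i.e. strict violation. So it is indeed a cut.

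For the maximality claim I would argue exactly as in Proposition~\ref{proposition:projection}: among all valid inequalities $a^\top(x,y) \le b$ for $S_r$, normalize so that the violation is measured consistently (e.g. $\|a\| = \|(\overline{x},\overline{y})^\top\|$, matching the candidate cut). The amount of violation at $(\overline{x},\overline{y})$ is $a^\top(\overline{x},\overline{y}) - b$. Validity over $S_r$ forces $b \ge \max_{(x,y)\in S_r} a^\top(x,y) = r\|a\|$, so the violation is at most $a^\top(\overline{x},\overline{y}) - r\|a\| \le \|a\|\,\|(\overline{x},\overline{y})^\top\| - r\|a\|$, with equality in the first bound iff $a$ is a nonnegative multiple of $(\overline{x},\overline{y})^\top$ and equality in the second iff $b = r\|a\|$. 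Both are achieved by the proposed cut, which is therefore the maximally violated one (after the normalization). Alternatively, and most economically, I would simply invoke Proposition~\ref{proposition:projection} with $n = 2$, $s \mapsto r$ held fixed: the maximally violated cut for $\{(x,y): \|(x,y)\| \le r\}$ is $\overline{x}^\top x + \overline{y}^\top y \le r\|(\overline{x},\overline{y})^\top\|$.

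I do not anticipate a genuine obstacle here; the only point needing a little care is the phrase ``attains the maximum violation,'' which is only meaningful once one fixes a normalization of the cut's coefficient vector (otherwise scaling inflates violation without bound) — this is the same convention implicitly used in Proposition~\ref{proposition:projection}, and I would either state it explicitly or lean on that proposition to inherit it. Everything else is Cauchy--Schwarz and a one-line check of the equality case.
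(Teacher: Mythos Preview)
Your proposal is correct but proceeds differently from the paper. The paper's proof is a direct projection computation: it observes that the Euclidean projection of $(\overline{x},\overline{y})$ onto $S_r$ is the radial scaling $t_0(\overline{x},\overline{y})$ with $t_0 = r/\|(\overline{x},\overline{y})\|$, and then reads off the supporting hyperplane at that boundary point, with normal $(1-t_0)(\overline{x},\overline{y})$ and right-hand side $(1-t_0)t_0\|(\overline{x},\overline{y})\|^2$, which simplifies to the stated cut. You instead establish validity and violation via Cauchy--Schwarz and then argue maximality by optimizing over normalized valid inequalities. Both are elementary; your route makes the ``maximum violation'' assertion more transparent (the paper leaves it implicit in the projection step), while the paper's is shorter. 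One small caveat: your ``most economical'' alternative of invoking Proposition~\ref{proposition:projection} with $s$ frozen at $r$ is not a clean formal reduction, since that proposition optimizes over cuts for the cone (with $s$ a variable), and valid inequalities for the ball do not in general extend to valid inequalities for the full cone. Your primary Cauchy--Schwarz argument is self-contained, so this does not affect correctness.
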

\begin{proof}
    See Appendix~\ref{appendix:proofthermalcuts}.
\end{proof}

\subsubsection{On the Complementarity of the Jabr and i2 cuts}\label{subsubsection:complementarity}
If $\{k,m\}$ is a transmission line with no transformer nor shunt elements, then
        \begin{equation}\label{i2simpleline}
            i_{km}^{(2)} = \left( \frac{1}{r_{km}^{2} + x_{km}^{2}} \right) \left( v_{k}^{(2)} + v_{m}^{(2)} - 2  c_{km}  \right)
        \end{equation}
where $r_{km}$ and $x_{km}$ denote line's $\{k,m\}$ resistance and reactance (see~\ref{appendix:i2def}). Suppose that $i_{km}^{(2)}$ is upper-bounded by some constant $H_{km}$ and that the line $\{k,m\}$ has a small resistance, e.g., on the order of $10^{-5}$ (p.u.). Since $x_{km}$ is usually an order of magnitude larger than $r_{km}$, the coefficient $(r_{km}^{2} + x_{km}^{2}) H_{km}$ can be fairly small, hence we have
\begin{equation}\label{surfacejabrcone}
    v_{k}^{(2)} + v_{m}^{(2)} - 2 c_{km} \leq (r_{km}^{2} + x_{km}^{2}) H_{km} \approx 0
\end{equation}
Since $v_{k}^{(2)} +  v_{m}^{(2)}  - 2 c_{km} \geq 0$ is a Jabr outer-envelope cut (c.f. proof Proposition~\eqref{prop:jabrouter}), inequality~\eqref{surfacejabrcone} is enforcing our solutions to be on the surface of the rotated-cone $c_{km}^{2} + s_{km}^{2} \leq v_{k}^{(2)} v_{m}^{(2)}$. 

\subsection{Basic Algorithm and Cut Management}\label{subsection:basicalgorithm}

In what follows we describe our cutting-plane algorithm. First we define a linearly constrained base model $M_{0}$ as follows:
\begin{subequations}\label{basemodel}
\begin{align}
[M_{0}]: &\hspace{2em}\min \hspace{2em} \sum_{k \in \mathcal{G}} F_{k}(P_{k}^{g}) \\
    \text{subject to:}& \nonumber \\
    &\text{constraints } \eqref{AC:activepowerbal},\eqref{AC:reactivepowerbal},\eqref{JABR:flows}, \eqref{AC:genpowerlimit}, \eqref{AC:voltlimit}
\end{align}
\end{subequations}
In other words, we consider the linearized power flow equations of the Jabr SOCP and all of the linear constraints in~\eqref{AC:firstformulation}. 

In every round of our iterative procedure, linear constraints will be added to and removed from $M_{0}$. The exact manner in how this will be done is described below. We will denote by $M$ our dynamic relaxation at some iteration of our cutting-plane algorithm. 


\begin{algorithm}\label{thealgorithm}
\caption{Cutting-Plane Algorithm}
\begin{algorithmic}[1]
\Procedure {Cutplane}{}
\State Initialize $r \gets 0$, $M \gets M_{0}$, $z_{0} \gets + \infty$
\While{$t < T$ and $r < T_{ftol}$}
\State $z \gets \min M$ and $\bar{x} \gets \text{argmin} \, M$
\State Check for violated inequalities by solution $\overline{x}$
\State Sort inequalities by violation
\State Compute cuts for the most violated inequalities
\State Add cuts if they are not $\epsilon$-parallel to cuts in $M$
\State Drop cuts of age $\geq T_{age}$ whose slack is $\geq$ $\epsilon_{j}$
\If{$z - z_{0} < z_{0} \cdot \epsilon_{ftol}$}
\State $r \gets r+1$
\Else
\State $r \gets 0$
\EndIf
\State $z_{0} \gets z$
\EndWhile
\EndProcedure
\end{algorithmic}
\end{algorithm}

Given a feasible solution $\bar{x}$ to $M$, and letting $f_{km}(x) \leq 0$ be some valid convex inequality~\eqref{rotatedconeinequalities} or~\eqref{thermallimit}, our measure of \emph{cut-quality} is the amount ${\max \{ f_{km}(\overline{x}), 0 \}}$ by which the solution $\overline{x}$ violates the valid convex inequality. Let $\epsilon > 0$, then for each type $\tau \in \{ \text{Jabr}, \text{i2}, \text{limit} \}$ of inequality, i.e., Jabr and i2 rotated-cones and thermal limits, we sort the branches $\{k,m\}$ from highest to lowest violation strictly greater than $\epsilon$, and pick as $\tau$-candidates branches, for which cuts will be added to $M$, the top $p_{\tau}$ (fixed parameter) percentage of the most violated branches. 

For each list of $\tau$-candidates, we compute cuts for the corresponding branches using the efficient cut procedures described in~\ref{subsection:cuts}. Candidate cuts will be rejected if they are \emph{too parallel} to incumbent cuts in $M$~\cite{c33,c34}. To be precise, given $\epsilon_{par} > 0$, we say that two linear inequalities $c^{t}x \leq 0$ and $d^{t} x \leq 0$ are \emph{$\epsilon_{par}$-parallel} if the cosine of the angle between their normal vectors $c$ and $d$ is strictly more that $1 - \epsilon_{par}$.


Finally, we describe a heuristic for \emph{cleaning-up} our formulation. For each added cut, we keep track of its current \emph{cut-age}, i.e., the difference between the current round and the round in which it was added to the relaxation. Then, in every iteration, if a cut $c^{\top}x \leq d$ has age greater or equal than a fixed parameter $T_{age} \in \mathbb{N}$, and it is \emph{$\epsilon$-slack}, i.e., $d - c^{\top} \overline{x} > \epsilon$, then it is dropped from $M$.

In addition to $M_{0}$ and the parameters $p_{\tau}, \epsilon, \epsilon_{par}, T_{age}$, other inputs for our procedure are: a time limit $T>0$; the number of admissible iterations without sufficient objective improvement $T_{ftol} \in \mathbb{N}$; and a threshold for objective relative improvement $\epsilon_{ftol} > 0$.

\subsection{Computational Results}\label{subsection:experiments}

\begingroup
\setlength{\tabcolsep}{6pt} 
\begin{table*}
\caption{Cutting-Plane (Not Warm-Started)}
\centering
\begin{tabular}{ @{} l r r r r r r r r@{} }
\toprule
& \multicolumn{5}{c}{Cutting-Plane} & \multicolumn{2}{c}{Primal bound} & \\
\cmidrule(lr{.75em}){2-6} \cmidrule(lr{.75em}){7-8}
\multicolumn{1}{l}{Case} & Objective & Time (s) & Computed & Added & Rounds & Objective & Time (s) & \\
\midrule
9241pegase & 309221.81 & 378.82 & 135599 & 29875 & 23 & 315911.56 & 96.74 & \\
9241pegase-api & 6924650.57 & 277.32 & 128316 & 30230 & 21 & 7068721.98 & 73.85 & \\
9241pegase-sad & 6141202.28 & 386.51 & 113686 & 27273 & 21 & 6318468.57 & 33.92 & \\
9591goc-api & 1346373.10 & 187.26 & 87812 & 22469 & 22 & 1570263.74 & 42.85 & \\
9591goc-sad & 1055493.25 & 246.87 & 90153 & 20514 & 27 & 1167400.79 & 28.15 & \\
ACTIVSg10k & 2476851.62 & 132.16 & 60803 & 18183 & 19 & 2485898.75 & 76.71 & \\
10000goc-api & 2502026.03 & 147.12 & 73084 & 19666 & 24 & 2678659.51 & 23.46 & \\
10000goc-sad & 1387303.02 & 114.97 & 58984 & 18528 & 17 & 1490209.66 & 103.06 & \\
10192epigrids-api & 1849488.30 & 152.87 & 97921 & 24882 & 22 & 1977687.11 & 117.15 & \\
10192epigrids-sad & 1672819.53 & 185.02 & 95740 & 23726 & 23 & 1720194.13 & 23.74 & \\
10480goc-api & 2708819.18 & 200.48 & 114967 & 29805 & 21 & 2863484.4 & 38.71 & \\
10480goc-sad & 2287314.69 & 270.38 & 118122 & 28004 & 24 & 2314712.14 & 27.93 & \\
13659pegase & 379084.55 & 841.83 & 176962 & 37297 & 22 & 386108.81 & 1184.15 & \\
13659pegase-api & 9270988.77 & 326.57 & 147479 & 34390 & 19 & 9385711.45 & 44.43 & \\
13659pegase-sad & 8868216.24 & 301.87 & 130682 & 32662 & 19 & 9042198.49 & 42.08 & \\
19402goc-api & 2448812.41 & 440.67 & 213564 & 52388 & 22 & 2583627.35 & 87.33 & \\
19402goc-sad & 1954047.79 & 488.33 & 218291 & 49749 & 25 & 1983807.59 & 64.01 & \\
20758epigrids-api & 3042956.88 & 464.17 & 189436 & 46124 & 25 & 3126508.3 & 61.39 & \\
20758epigrids-sad & 2612551.03 & 379.36 & 180790 & 44624 & 24 & 2638200.23 & 58.11 & \\
24464goc-api & 2560407.12 & 471.14 & 226595 & 57162 & 22 & 2683961.9 & 533.03 & \\
24464goc-sad & 2605128.51 & 506.39 & 222908 & 55242 & 23 & 2653957.66 & 73.87 & \\
ACTIVSg25k & 5993266.85 & 592.39 & 156285 & 43851 & 28 & 6017830.61 & 56.69 & \\
30000goc-api & 1531110.84 & 464.16 & 142385 & 41840 & 24 & 1777930.63 & 134.71 & \\
30000goc-sad & 1130733.51* & 147.74 & 76546 & 76546 & 6 & 1317280.55 & 565.05 & \\
ACTIVSg70k & 16326225.66 & 1065.76 & 350572 & 123431 & 13 & 16439499.83 & 240.55 & \\
78484epigrids-api & 15877674.54 & 1007.99 & 556893 & 240576 & 10 & 16140427.68 & 1079.03 & \\
78484epigrids-sad & 15175077.19 & 1062.55 & 501202 & 313587 & 8 & 15315885.86 & 343.45 & \\
\bottomrule
\end{tabular}
\label{table:cuts}
\end{table*}
\endgroup


We ran all of our experiments on an Intel(R) Xeon(R) Linux64 machine CPU E5-2687W v3 $3.10$GHz with $20$ physical cores, $40$ logical processors, and $256$ GB RAM. We used three state-of-the-art commercial solvers: Gurobi version 10.0.1~\cite{c35}, Artelys Knitro version 13.2.0~\cite{c36}, and Mosek 10.0.43~\cite{c37} For the SOCP and ACOPF we wrote AMPL modfiles and we ran them with a Python 3 script. We note that unlike Gurobi and Knitro, Mosek does not detect that a constraint like $x^2 + y^2 \leq z^2$ or $x^2 + y^2 \leq wz$ is actually a conic constraint, therefore we had to reformulate the SOCP to a format Mosek-AMPL was able to read. Now we describe the parameter specifications for each solver.

\paragraph{Gurobi} We use the Gurobi's default homogeneous self-dual embedding interior-point algorithm (barrier method without \emph{Crossover}, and \emph{Bar Homogeneous} set to $1)$, and we set the parameter \emph{Numeric Focus} equal to $1$. Barrier convergence tolerance and absolute feasibility and optimality tolerances were set to $10^{-6}$. Since by default Gurobi assigns any available cores to use for parallel computing automatically, it was not necessary to specify the number of threads. 

\paragraph{Knitro} We use Knitro's default Interior-Point/Barrier Direct Algorithm, with absolute feasibility and optimality tolerances equal to $10^{-6}$. We used the linear solver HSL MA57 sparse symmetric indefinite solver, and the Intel Math Kernel Library (MKL) functions for Basic Linear Algebra Subroutines (BLAS), i.e., for basic vector and matrix computations. Moreover, we gave Knitro $20$ threads to use for parallel computing features. When solving the SOCPs, we explicitly told Knitro that the problem is convex. We note that for computing primal bounds, we also tried the non-deterministic linear solver HSL MA97 whenever Knitro with MA57 was not converging. 

\paragraph{Mosek} We use Mosek's default homogeneous and self-dual interior-point algorithm for conic optimization. We set the relative termination tolerance, as well as primal and dual absolute feasibility tolerances to $10^{-6}$. On the test platform we assigned to Mosek $20$ threads.

\vspace{1em}

Our cutting-plane algorithm is implemented in Python 3 and calls Gurobi 10.0.1 as a subroutine for solving an LP or convex QP. All of our reported experiments were obtained with the following parameter setup: $\epsilon = 10^{-5}$, $p_{Jabr} = 0.55$, $p_{i2} = 0.15$, $p_{limit} = 1$, $T_{age} = 5$, $\epsilon_{par} = 10^{-5} /2$, $\epsilon_{ftol} = 10^{-5}$, and $T_{ftol} = 5$. All of our codes and AMPL model files can be downloaded from www.github.com/matias-vm, as well as links to our reported solutions.

We report extensive numerical experiments on instances with at least 9000 buses from the following data sets:
the Pan European Grid Advanced Simulation and State Estimation (PEGASE) project~\cite{c38,c39}, ACTIVSg synthetic cases developed as part of the US ARPA-E GRID DATA research project~\cite{c40,c41}, and the largest instances from the Power Grid Library for Benchmarking AC Optimal Power Flow Algorithms~\cite{c42}. 

We set a time limit of $1,000$ seconds for all of our SOCP experiments. We did not set a time limit for computing ACOPF primal bounds, and for our cutting-plane algorithm we enforced the $1,000$ seconds time limit before \emph{starting} a new round (one iteration of our algorithm takes around $60-100$ seconds for the largest cases). The character $``-"$ denotes that the solver did not converge, while the string $``\text{TLim}"$ means that the solver did not converge within our time limit of $1,000$ seconds. By convergence we mean that the solver \emph{declares} to have obtained an \emph{optimal} solution, within the previously defined tolerances.
We remark though that Gurobi and Knitro provide control of absolute primal and dual feasibility tolerances~\cite{c35,c36}, while Mosek only allows controlling normalized (by the RHS of the constraints) primal and dual feasibility tolerances~\cite{c37}. The string $``\text{INF}"$ means that the instance was declared infeasible by the solver, while $``\text{LOC INF}"$ that the instance might be locally infeasible. Moreover, if Gurobi declares 
\textit{numerical trouble} while solving our LPs or convex QPs at some iteration of our algorithm, we report the objective value of the previous iteration followed by the character $``*"$. We also note that objective values and running times are reported with 2 decimal places.

We remark that, to the best of our knowledge, this is the first computational study which compares the performance of three leading commercial solvers on the Jabr SOCP using a common framework (AMPL). We evaluate the solvers on Jabr SOCP, and compare our warm-started formulations on this formulation instead of the i2 SOCP because Jabr is numerically better behaved from the solvers' perspective. Indeed, the definition~\eqref{appendix:i2def_eq} of the $i^{(2)}$ variables can involve very large coefficients (on linear inequalities), yielding a numerically challenging nonlinear relaxation for most of the solvers. We report on these numerical issues in subsection~\ref{subsubsection:solversperformance}.

\subsubsection{Non-Warm-Started Cut Computations}\label{subsubsection:cuts&socps}

The purpose of Table~\ref{table:cuts} is summarize information regarding cut computations for a substantial number of instances from the libraries described above. The first multicolumn ``Cutting-Plane" subsumes information regarding our cutting-plane procedure: its first column ``Objective" reports the objective of the last iteration of our algorithm, the second column ``Time (s)" reports the total running time (in seconds) of our method; the third column ``Computed" reports the number of cuts computed throughout the whole procedure; the fourth column ``Added" exhibits the total number of cuts in our linearly constrained relaxation at the last round (these are the cuts used to warm-start our relaxations, c.f. \ref{subsubsection:experiments_warmstarts}); and the fifth column ``Rnd" the number of rounds of cuts. Finally, the last multicolumn ``Primal bound" reports the objective value of a feasible solution to ACOPF and the amount of time (in seconds) it took the nonlinear solver Knitro to find it.

\begingroup
\setlength{\tabcolsep}{6pt} 
\begin{table*}
\caption{Solvers' Performance on Jabr SOCP}
\centering
\begin{tabular}{ @{} l r r r r r r r @{} }
\toprule
& \multicolumn{3}{c}{Objective} & \multicolumn{3}{c}{Time (s)} & \\
\cmidrule(lr{.75em}){2-4} \cmidrule(lr{.75em}){5-7}
\multicolumn{1}{l}{Case} & Gurobi & Knitro & Mosek & Gurobi & Knitro & Mosek & \\
\midrule
9241pegase & - & 309234.16 & - & 82.11 & 34.68 & 31.11 & \\
9241pegase-api & - & 6840612.84 & - & 116.32 & 23.39 & 72.29 & \\
9241pegase-sad & - & 6083747.85 & - & 111.05 & 26.01 & 75.99 & \\
9591goc-api & 1346480.71 & 1348107.89 & 1345869.72 & 38.25 & 23.74 & 36.60 & \\
9591goc-sad & 1055698.54 & 1058606.56 & 1054379.58 & 49.29 & 32.83 & 37.61 & \\
ACTIVSg10k & - & 2468172.93 & 2466666.10 & 40.18 & 21.48 & 26.08 & \\
10000goc-api & - & 2507034.94 & 2498948.00 & 48.63 & 35.19 & 30.13 & \\
10000goc-sad & 1387288.49 & 1388679.63 & 1386041.07 & 23.58 & 26.27 & 23.68 & \\
10192epigrids-api & - & 1849684.14 & 1848873.47 & 75.82 & 42.69 & 29.09 & \\
10192epigrids-sad & - & 1672989.96 & 1672534.72 & 83.85 & 28.33 & 28.63 & \\
10480goc-api & - & 2708973.58 & 2707828.26 & 75.94 & 27.21 & 56.82 & \\
10480goc-sad & - & 2286454.3 & 2285547.23 & 149.93 & 38.17 & 59.48 & \\
13659pegase & 379135.73 & 379144.11 & - & 33.61 & 43.26 & 34.92 & \\
13659pegase-api & - & 9198542.14 & - & 162.21 & 30.64 & 105.11 & \\
13659pegase-sad & 8826902.31 & 8826958.23 & 8787429.86 & 83.75 & 31.84 & 108.74 & \\
19402goc-api & - & 2449020.25 & 2447799.72 & 158.12 & 152.89 & 103.04 & \\
19402goc-sad & - & 1954331.70 & 1952550.06 & 203.56 & 155.89 & 104.88 & \\
20758epigrids-api & - & - & 3040421.02 & 143.99 & TLim & 93.46 & \\
20758epigrids-sad & - & - & 2610196.94 & 98.30 & TLim & 75.88 & \\
24464goc-api & 2548335.96 & - & 2558631.63 & 603.95 & TLim & 129.90 & \\
24464goc-sad & - & - & 2603525.46 & 333.50 & TLim & 128.50 & \\
ACTIVSg25k & 5956787.54 & 5964417.54 & 5955368.56 & 169.66 & 87.14 & 87.18 & \\
30000goc-api & - & 1531256.65 & 1529197.81 & 207.60 & 118.80 & 123.38 & \\
30000goc-sad & - & - & 1130868.71 & 191.22 & TLim & 84.90 & \\
ACTIVSg70k & - & 16221577.73 & 16217263.66 & 553.26 & 320.98 & 232.47 & \\
78484epigrids-api & - & - & - & 756.00 & TLim & 637.48 & \\
78484epigrids-sad & 15180775.21 & - & 15169401.54 & 463.17 & TLim & 601.04 & \\
\bottomrule
\end{tabular}
\label{table:jabrsocps}
\end{table*}
\endgroup

Overall, we see that our cut management heuristics permits us obtain very tight linearly constrained relaxations with a relatively small number of cuts - note that we could potentially add $3|\mathcal{E}|$ cuts per round (since for each transmission $\{k,m\}$ there are exactly three nonlinear valid convex inequalities~\eqref{rotatedconeinequalities} and \eqref{thermallimit} that could be violated). For instance, case ACTIVSg70k has $88207$ branches and after 10 rounds of cuts we end up keeping $123431$ out of the $350572$ linear cuts computed throughout the course of our algorithm. Therefore, fewer than $1.5$ of linear cuts per branch gives us a relaxation with \emph{optimality gap}\footnote{Given a primal bound of a minimization problem, we define the optimality gap of a relaxation of the given problem as $\frac{z_{p}-z_{r}}{z_{p}}$, where $z_{p}$ denotes the objective value of the primal bound and $z_{r}$ denotes the objective value of the relaxation.} equal to $0.69 \%$.

We remark that for some instances the objective value of our procedure can be higher than the objective value of the Jabr SOCP since our algorithm is outer-approximating the feasible region of the i2 SOCP (c.f. Proposition~\ref{prop:i2_stronger_than_jabr}).

 
We also note that 30000goc-sad was the only instance for which the solver ran into numerical trouble while solving the linearly constrained relaxation (indicated by the character $``*"$ next to the objective value). Therefore, the reported objective value corresponds to the previous iteration. Setting a more aggressive cut management heuristic, for instance decreasing $T_{age}$ from $4$ to $5$, gave us numerically more stable cuts and a better bound.

Finally, we obtained our primal bounds by running Knitro with a \emph{flat-start}, i.e., we provided as initial point voltages set to $1$ and $\theta_{km} = 0$.

\subsubsection{Solvers' Performance on Jabr SOCP}\label{subsubsection:solversperformance}


In Table~\ref{table:jabrsocps}, we observe that for the cases in which at least two solvers converge, the reported bounds for the Jabr SOCP agree on the first 3 most significant digits. These differences in bounds across the different solvers reflect how numerically challenging the given instances are. We remind the readers of the parameter choices that we made in order for the solvers to achieve termination -- which otherwise would often fail.

As we mentioned at the beginning of this section, the i2 SOCP is numerically even more challenging for the solvers than the Jabr SOCP. Indeed, in Table~\ref{table:i2socps} we can see that the solvers do struggle. We studied in detail some cases where Gurobi AMPL declared optimality, for example case ACTIVSg70k, and observed variable bound max violation (scaled) equal to $8.43$ as well as large primal and dual residuals ($0.0128$ and $3.25$, respectively). Moreover, we noticed inconsistent termination status for cases 10192epigrids-sad, 10480goc-api, 20758epigrids-sad, and 30000goc-sad on Gurobi and Gurobi through AMPL (Gurobi-AMPL) using the same model; Gurobi AMPL declares optimal termination for these instances while Gurobi does not. Cases for which we were able to identify \emph{low quality} solutions or inconsistencies have been denoted with the character ``\dag" next to their reported objective value in Table~\ref{table:i2socps}.



\subsubsection{Warm-Starts}\label{subsubsection:experiments_warmstarts}

In power engineering practice, it is often the case that a power flow problem (either in the AC or DC version) is solved on data that reflects a recent, and likely limited, update on a case that was previously handled. In power engineering language, a 'prior solution' was computed, and the problem is not solved 'from scratch.' In the context of our type of algorithm, this feature opens the door for the use of \textit{warm-started formulations}, i.e., the application of a cutting-plane procedure that leverages previously computed cuts to obtain sharp bounds more rapidly than `from scratch'. In this subsection we present this warm-starting feature of our algorithm; we justify its validity and show via numerical experiments its appealing lower bounding capabilities. 

The convex inequalities~\eqref{rotatedconeinequalities}, based on which we are dynamically adding cuts, do not depend on input data such as loads or operational limits. Any such inequality remains valid and can be used if the associated branch remains operational.  This will be our strategy, below.

We created two kinds of perturbed instances: a) Instances were the load of each bus was perturbed by a Gaussian $(\mu,\sigma) = (0.01 \cdot P_{d}, 0.01 \cdot P_{d})$, where $P_{d}$ denotes the original load, subject to the newly perturbed load being non negative; and b) instances were the transmission line which carries the largest amount of active power in an ACOPF solution is turned off. We note though that perturbed cases b) do change the structure of the network, since we are setting off the status of an active branch. Hence, when warm-starting type b) cases, we will skip any cuts associated to the branch which becomes inactive.

Tables~\ref{table:ws_loads_1_1} and~\ref{table:ws_onelineoff} summarize our warm-started experiments on perturbed instances from our data set in Table~\ref{table:cuts} and compare to solvers' performance on the Jabr SOCP. ``First Round" reports the objective value and running time of the relaxation $M_{0}$ loaded with the cuts computed in Table~\ref{table:cuts}, i.e., our warm-started relaxation.  ``Last Round" presents the objective value and running time of the last iteration of our cutting-algorithm (on the warm-started relaxation).  ``Jabr SOCP" and ``Primal bound" report, respectively, on the objective value and running time of the Jabr SOCP for the three solvers, and ACOPF primal solutions.

We stress the comparison between the running time for our first round, and the solvers' running time.

\paragraph{Loads perturbed -- Gaussian deltas} 
For most of the instances, our procedure proves very tight lower bounds in less than 25 seconds (``First Round" column). Judging by the time it takes Knitro to find a locally optimal primal bound and by the number of cases in which the solvers converge, running the SOCPs, it seems that these instances are overall more challenging than their unperturbed counterparts.

Our procedure also stands out in quickly lower bounding the largest cases. For instance, a very sharp bound for case ACTIVSg70k is obtained in 102.25 seconds, taking less than half of the time it takes the fastest SOCP solver to converge. Similar performance is achieved on the largest epigrids cases where our method is 3x to 5x faster.   

An interesting empirical fact is that our cuts are robust with respect to load perturbations. Indeed, our evidence shows that there is not a considerable improvement from the ``First Round" to the ``Last Round" objectives.
This means that the previously pre-computed cuts loaded to $M_{0}$ in the first iteration are accurately outer-approximating the SOC relaxations.

Moreover, our linearly constrained relaxations are able to prove infeasibility for case 9241pegase-api in 23.10 seconds while none of the three solvers were able to provide a certificate of infeasibility for the Jabr SOCP. Knitro required 1845.42 seconds to declare convergence to a locally infeasible solution. Similar results are obtained for case 24464ogc-api.

The only case were our method fails to provide a valid lower bound is case 30000goc-sad -- our minimization oracle reports numerical trouble and fails to provide a solution to our warm-started relaxation. This is not surprising since difficult numerical behavior was noticed when computing cuts for this case.

\paragraph{Transmission line with largest flow turned off}
Overall, our method achieves a similar performance on this set of perturbed instances as in a); sharp lower bounds are obtained in about 25 seconds for most of the cases.

For this data set, our method and all of the SOCP solvers are able to prove infeasibility relatively quickly. On the other hand, our method proves a lower bound for ACTIVSg70k relatively quickly in the first round, but fails to converge in the next round due to numerical trouble caused by the newly added cuts.

As in a), our warm-started formulation achieves a good performance on the largest epigrid cases --  bounds are sharp with respect to the SOC relaxations and it is at least 3x faster.

The only case were our method fails to provide a lower bound is case 30000goc-sad -- our minimization oracle reports numerical trouble and fails to provide a solution to our warm-started relaxation.

\begingroup
\setlength{\tabcolsep}{6pt} 
\begin{table*}
\caption{Solvers' Performance on i2 SOCP}
\centering
\begin{tabular}{ @{} l r r r r r r r @{} }
\toprule
& \multicolumn{3}{c}{Objective} & \multicolumn{3}{c}{Time (s)} & \\
\cmidrule(lr{.75em}){2-4} \cmidrule(lr{.75em}){5-7}
\multicolumn{1}{l}{Case} & Gurobi & Knitro & Mosek & Gurobi & Knitro & Mosek & \\
\midrule
10192epigrids-api & 1849683.44 & 1849684.2 & - & 37.44 & 19.4 & 30.74 & \\
10192epigrids-sad & 1672998.72\dag & 1672998.73 & - & 23.36 & 21.48 & 24.53 & \\
10480goc-api & 2709110.52\dag & 2709110.71 & - & 41.44 & 27.78 & 35.28 & \\
10480goc-sad & 2287736.73 & 2287715.33 & - & 41.11 & 28.46 & 28.48 & \\
13659pegase & 379142.67 & - & - & 52.12 & TLim & 36.49 & \\
13659pegase-api & 9287242.7 & 9287244.72 & - & 66.39 & 236.65 & 30.01 & \\
13659pegase-sad & 8878803.69 & - & - & 63.11 & TLim & 30.48 & \\
19402goc-api & 2449100.15\dag & 2449102.05 & - & 79.38 & 55.6 & 54.18 & \\
19402goc-sad & 1954367.11\dag & 1954367.2 & - & 180.97 & 59.46 & 82.55 & \\
20758epigrids-api & - & 3043275.95 & - & 79.03 & 64.17 & 56.02 & \\
20758epigrids-sad & 2612841.71\dag & 2612841.8 & - & 48.32 & 84.07 & 58.87 & \\
24464goc-api & 2560829.65\dag & - & - & 132.34 & TLim & 88.79 & \\
24464goc-sad & 2605532.65\dag & - & - & 74.43 & 916.1 & 65.41 & \\
ACTIVSg25k & 5994727.45 & - & - & 70.61 & TLim & 52.38 & \\
30000goc-api & 1531320.78 & 1531322.2 & - & 96.91 & 593.73 & 71.38 & \\
30000goc-sad & 1132242.88\dag & 1132256.94 & - & 78.0 & 325.11 & 74.61 & \\
ACTIVSg70k & 16333807.38\dag & - & - & 300.98 & TLim & 209.3 & \\
78484epigrids-api & 15882668.49 & 15882668.46 & 15882654.42 & 216.15 & 315.31 & 203.81 & \\
78484epigrids-sad & 15180792.15 & 15180792.0 & 15180763.6 & 250.43 & 376.82 & 222.17 & \\
\bottomrule
\end{tabular}
\label{table:i2socps}
\end{table*}
\endgroup

\begingroup
\setlength{\tabcolsep}{3pt} 

\begin{table*}
\caption{Warm-Started Relaxations, Loads perturbed Gaussian $(\mu,\sigma) = (0.01 \cdot P_{d}, 0.01 \cdot P_{d})$}
\centerline{
\begin{tabular}{ @{} l r r r r r r r r r r r r r @{} }
\toprule
& \multicolumn{4}{c}{Cutting-Plane} & \multicolumn{6}{c}{Jabr SOCP} &  \\
\cmidrule(l{0.5em}r{0.40em}){2-5} \cmidrule(l{0.5em}r{0.40em}){6-11}
& \multicolumn{2}{c}{First Round} & \multicolumn{2}{c}{Last Round} & \multicolumn{3}{c}{Objective} & \multicolumn{3}{c}{Time (s)} & \multicolumn{2}{c}{Primal bound} & \\
\cmidrule(l{0.5em}r{0.40em}){2-3} \cmidrule(l{0.5em}r{0.40em}){4-5} \cmidrule(l{0.5em}r{0.40em}){6-8} \cmidrule(l{0.5em}r{0.40em}){9-11} \cmidrule(l{0.5em}r{0.40em}){12-13} 
\multicolumn{1}{l}{Case} & Objective & Time (s) & Objective & Time (s) & Gurobi & Knitro & Mosek & Gurobi & Knitro & Mosek & Objective & Time (s) &  \\
\midrule
9241pegase    &     309288.32     &     13.78     &     309299.97     &    160.28    &   -   &    309302.67    &  -  &   73.12   &    32.21    &  36.04  & 315979.53 & 101.48 &      \\
9241pegase-api    &    INF      &     23.10     &     INF     &   23.10     &   -   &    -    &  -  &   134.53   &    TLim    &  72.96  & LOC INF & 1845.92 &  \\
9241pegase-sad    &     6153913.91     &    16.18      &    6154117.59      &    136.78    &   -   &    6096743.03    &  -  &   97.51   &    26.07    &  83.43  & 6333763.92 & 43.71 &  \\
9591goc-api    &     1343642.47     &     11.06     &     1343670.62    &    56.36    &   1343767.43   &    1345384.57    &  1343190.29  &   39.36   &    25.36    &  35.30  & 1571582.59 & 54.16 &  \\
9591goc-sad    &     1058124.48     &    12.62      &     1058157.44     &    65.37    &   1058337.76   &    1061275.83    &  1057323.31  &   51.85   &    34.04    &  37.52  & 1178895.53 & 29.53 &  \\
ACTIVSg10k    &    2475041.43      &      9.52    &     2475078.69     &    50.51     &   -   &    2466383.20    &  -  &   42.31   &    21.75    &  29.33  & 2484093.15 & 57.24 &  \\
10000goc-api    &   2502049.28       &    8.51      &     2502098.01    &    36.48     &   2501946.30   &    2507074.78    &  2499373.75  &   31.91   &    43.44    &  32.33  & LOC INF & 1677.21 &  \\
10000goc-sad    &     1388833.86     &    8.70      &     1388859.09     &    44.50     &   1388824.91   &    1390230.41    &  1387588.17  &   25.96   &    29.31    &  23.67  & 1493481.44 & 93.72 &  \\
10192epigrids-api    &     1848085.36     &    10.27      &    1848133.48      &    45.84    &   -   &    1848285.26    &  1847120.93  &   65.38   &    41.17    &  25.99  & LOC INF & 1458.35 &  \\
10192epigrids-sad    &     1672358.89     &    10.33      &     1672398.61     &    53.37    &   -   &    1672533.02    &  1671364.67  &   73.64   &    28.61    &  35.66  & 1717429.36 & 23.89 &  \\
10480goc-api    &      2704157.29    &    12.43      &     2704252.95     &    58.45    &   -   &    2704373.73    &  2703432.85  &   197.17   &    27.57    &  55.92  & 2868495.28 & 36.89 &  \\
10480goc-sad    &    2294908.37      &    12.81      &    2294990.69      &    70.93    &   -   &    2294080.35    &  2292830.56  &   185.22   &    35.90    &  58.31  & 2322198.81 & 27.34 &  \\
13659pegase    &     379742.62     &    60.74      &     379794.51     &    426.88    &   379799.37   &    379804.43    &  -  &   34.21   &    43.17    &  32.75  & 386765.25 & 370.23 &  \\
13659pegase-api    &     9253539.07     &    21.25      &     9253773.43     &    109.20    &   9181205.93   &    9181269.20    &  -  &   97.11   &    30.41    &  118.31  & 9368277.57 & 62.20 &  \\
13659pegase-sad    &     8865733.59     &    21.28      &     8865892.49     &    113.04    &   8824442.20   &    8824486.03    &  -  &   86.49   &    33.19    &  102.59  & 9039904.52 & 40.02 &  \\
19402goc-api    &     2452185.69     &    23.55      &     2452270.83     &    120.10    &   -   &    2452448.33    &  2451708.50  &   146.87   &    120.39    &  103.32  & LOC INF & 4440.99 &  \\
19402goc-sad    &    1956255.19      &    23.28      &     1956313.91     &    113.89    &   -   &    1956570.60    &  1955018.07  &   231.90   &    172.82    &  102.19  & 1986936.95 & 66.02 &  \\
20758epigrids-api    &     3043006.76     &     22.34     &    3043076.56      &    104.06    &   -   &    -    &  3032919.24  &   134.60   &    TLim    &  78.32  & LOC INF & 12425.89 &  \\
20758epigrids-sad    &     2610197.53     &     20.46     &     2610261.88     &    93.09    &   -   &    -    &  2608090.26  &   143.69   &    TLim    &  72.19  & 2635892.81 & 49.25 &  \\
24464goc-api    &     2561680.14     &   26.28      &      INF    &    50.38    &   -   &    LOC INF    &  -  &   223.07   &    573.37    &  118.6  & - & 19444.54 &  \\
24464goc-sad    &     2606391.76     &     26.78     &    2606473.78      &     133.54   &   -   &    -    &  2604708.86  &   423.12   &    TLim    &  128.84  & 2655942.01 & 72.48 &  \\
ACTIVSg25k    &     5988886.18     &    28.24      &     5989016.75     &    198.58    &   5952404.50   &    5960068.30    &  5949381.04  &   138.01   &    73.75    &  109.39  & 6013477.05 & 57.87 &  \\
30000goc-api    &    1527412.96      &   25.35       &   1527487.45   &    151.75    &   -   &    1528338.73    &  1525625.64  &   243.61   &    369.83    &  119.92  & LOC INF & 3407.47 &  \\
30000goc-sad    &    -      &     46.33     &     -     &     46.33     &   -   &    -    &  1132715.53  &   257.94   &    TLim    &  75.20  & 1318389.55 & 620.27 &  \\
ACTIVSg70k    &    16316572.42      &    102.25      &    16317886.35       &    536.51    &   -   &    16210682.53    &  16206290.43  &   498.80   &    309.56    &  229.07  & 16428367.50 & 243.84 &  \\
78484epigrids-api    &     15862318.24     &     115.76     &     15865624.98     &     883.93     &   -   &    -    &  15859950.52  &   757.64   &    TLim    &  642.24  & - & 8113.53 &  \\
78484epigrids-sad    &     15176866.00     &     151.77     &     15180592.27     &    1118.02    &   15182602.75   &    -    &  15174716.43  &   420.56   &    TLim    &  589.46  & 15316872.94 & 353.13 &  \\
\bottomrule
\end{tabular}
\label{table:ws_loads_1_1}
}
\end{table*}

\endgroup

\begingroup
\setlength{\tabcolsep}{3pt} 

\begin{table*}
\caption{Warm-Started Relaxations, Transmission Line with largest flow turned off}
\centerline{
\begin{tabular}{ @{} l r r r r r r r r r r r r r @{} }
\toprule
& \multicolumn{4}{c}{Cutting-Plane} & \multicolumn{6}{c}{Jabr SOCP} &  \\
\cmidrule(l{0.5em}r{0.40em}){2-5} \cmidrule(l{0.5em}r{0.40em}){6-11}
& \multicolumn{2}{c}{First Round} & \multicolumn{2}{c}{Last Round} & \multicolumn{3}{c}{Objective} & \multicolumn{3}{c}{Time (s)} & \multicolumn{2}{c}{Primal bound} & \\
\cmidrule(l{0.5em}r{0.40em}){2-3} \cmidrule(l{0.5em}r{0.40em}){4-5} \cmidrule(l{0.5em}r{0.40em}){6-8} \cmidrule(l{0.5em}r{0.40em}){9-11} \cmidrule(l{0.5em}r{0.40em}){12-13} 
\multicolumn{1}{l}{Case} & Objective & Time (s) & Objective & Time (s) & Gurobi & Knitro & Mosek & Gurobi & Knitro & Mosek & Objective & Time (s) &  \\
\midrule
9241pegase  &  INF  &  10.86  &  INF  &  10.86  &  INF  &  INF  &  INF  &  8.36  & 7.55   &  10.22  & INF & 8.37 &  \\
9241pegase-api  &  INF  &  7.55  &  INF  &  7.55  &  INF  &  INF  &  INF  &  7.92  &  8.02  &  10.22  & INF & 8.23 &    \\
9241pegase-sad  &  INF  &  7.33  &  INF  &  7.33  &  INF  &  INF  &  INF  &  8.14  & 8.10 &  10.31  & INF & 8.46 &  \\
9591goc-api  &  1346470.95  &  10.42  &  1346859.06  &  60.76  &  1346969.75  &  1348591.44  &  1346437.99  &  39.30  &  17.98  &  36.89  & 1395829.51 & 28.08 &  \\
9591goc-sad  &  1055823.53  &  11.51  &  1056267.57  & 101.64   &  1056447.48  & 1059382.10 &  1055501.31  &  45.09  & 35.41   & 37.18 & 1199276.44 & 29.90 &  \\
ACTIVSg10k  & 2477043.05   &  9.94  &  2477537.79  &  75.85  &  -  &  2468821.96  &  2466981.35  &  44.81  &  21.60  &  17.52  & LOC INF & 7092.45 &  \\
10000goc-api  &  2506671.15  &  8.06  & 2509971.69 &  46.10  & 2509846.00 &  2514991.16  &  2506236.75  &  31.03  &  33.95  &  32.15  & 2692320.35 & 23.28 &  \\
10000goc-sad  &  1387382.65  &  8.76  &  1387515.89  &  66.14  & 1387480.33   &  1388870.68  &  1386283.75  &  26.53  &  34.24  &  24.14  & 1506187.88 & 108.19 &  \\
10192epigrids-api  &  1849901.82  &  9.47  &  1850621.81  &  68.73  &  -  &  1850788.76  &  1849821.44  &  69.81  &  38.01  & 25.30  & 2021493.05 & 117.18 &  \\
10192epigrids-sad  &  1673575.50  &  11.08  &  1674274.99  &  74.49  &  -  &  1674417.21  &  1673564.57  &  69.91  &  43.91  &  28.54  & 1734014.50 & 24.11 &  \\
10480goc-api  &  2710040.46  &  11.33  &  2711100.23  &  73.85  &  -  &  2711224.27  &  2710520.15  &  95.40  &  27.99  &  56.58  & 2862699.50 & 225.06 &  \\
10480goc-sad  &  2288069.64  &  13.47  & 2288969.47 & 98.88 & - &  2288069.23  &  2286864.08  &  106.54  &  37.65  &  59.43  & 2318279.76 & 26.13 &  \\
13659pegase  &  379102.13  &  53.29  &  379163.58  & 199.96 &  379177.99  &  379182.14  &  -  &  33.18  &  345.83  & 31.90 & 386126.93 & 394.93 &  \\
13659pegase-api  &  INF  &  10.81  &  INF  & 10.81 &  INF  &  INF  &  INF  &  10.94  &  8.90  &  13.79  & INF & 11.55 &  \\
13659pegase-sad  &  INF  & 10.63 &  INF  & 10.63 &  INF  &  INF  & INF &  11.02  &  10.80  &  13.76  & INF & 11.73 &  \\
19402goc-api  &  2450110.09  &  23.93  &  2451621.60  &  171.31 &  -  &  2451793.39  &  2450488.01  &  154.42  &  132.03  &  104.58  & 2587915.50 & 403.20 &  \\
19402goc-sad  & 1954365.39 & 23.70 &  1954881.06  &  191.52  &  -  &  1955116.35  &  1953676.05  &  258.93  &  156.10  &  102.63  & 1985954.83 & 63.38 &  \\
20758epigrids-api  &  3043482.21  &  20.44  &  3044690.46  & 133.92  & -  &  -  &  3041974.74  &  112.78  &  TLim  & 96.19 & 3132571.31 & 52.82 &  \\
20758epigrids-sad  & 2612646.70 & 20.62 & 2612786.37 & 115.89 &  -  &  -  &  2610315.41  &  169.67  &  TLim  &  73.02  & 2638560.64 & 47.87 &  \\
24464goc-api  & 2560669.11 &  25.94  & 2561110.03 & 161.80 &  2550118.22  &  - &  2559240.55  &  440.81  &  TLim  &  118.05  & 2684708.93 & 1663.63 &  \\
24464goc-sad  & 2605179.75 & 26.98 & 2605369.03 & 166.81 &  -  &   2605474.23 &  2603609.34  &  564.27  &  74.69  &  124.19  & 2654344.45 & 76.39 &  \\
ACTIVSg25k  &  6045885.88  &  27.52  &  6048122.86  & 238.67 &  6009656.52  &  6018875.03  &  6009500.57  &  144.28  &  65.42  & 81.41 & LOC INF & 1634.38 &  \\
30000goc-api  &  1531110.55  &  25.02  & 1531159.95 & 130.30 &  -  &  1532013.41  &  1529195.12  &  195.74  &  135.71  &  119.77  & LOC INF & 3203.26 &  \\
30000goc-sad  &  - &  45.60  &  -  &  45.60  &  -  &  -  &  1130917.78  &  218.12  &  TLim  &  74.27  & 1324622.71 & 186.43 &  \\
ACTIVSg70k  &  16426522.74  &  98.51  &  16426522.74*  &  98.51  &  -  &  -  &  -  &  150.77  &  TLim  &  129.60  & LOC INF & 3160.81 &  \\
78484epigrids-api  &  15888353.48  &  104.32  &  15892229.11  & 916.88 &  15894055.55  &  -  &  15880422.78  &  322.22  &  TLim  &  625.74  & 16169740.92 & 2328.79 &  \\
78484epigrids-sad  &  15179882.22  &  149.65  &  15185980.69  &  1151.33  &  15188085.99  &  -  &  15182701.65  &  437.59  & TLim   &  594.84  & 15330674.69 & 272.41 &  \\
\bottomrule
\end{tabular}
\label{table:ws_onelineoff}
}
\end{table*}
\endgroup

\section{CONCLUSIONS AND FUTURE WORK} 

In this paper we present a fast (linear) cutting-plane method used to obtain tight relaxations for even the largest ACOPF instances, by appropriately outer-approximating the SOC relaxations. Our relaxations can be constructed and solved robustly and quickly via a cutting-plane algorithm that relies on proper cut management and leverages mature linear programming technology. 

The central focus on this paper concerns \textit{reoptimization}. We show that our procedure possesses efficient warm-starting capabilities -- previously computed cuts, for some given instance, can be re utilized and loaded into new runs of \emph{related} instances, hence leveraging previous computational effort. As a main contribution we demonstrate, through extensive numerical testing in medium to (very) large instances, that the warm-start feature for our cutting-plane algorithm yields tight and accurate bounds far faster than otherwise possible. It is worth noting that this capability stands in contrast to what is possible using nonlinear (convex) solvers.
 
We believe our work paves the way for promising new research directions. For instance, since our relaxations are linear they could be deployed for practical pricing schemes which could increase welfare and mitigate biasedness in price signals~\cite{c12}. Moreover, we believe our relaxation is a natural candidate to supersede the well-known DC linear approximation in harder problems such as the Unit-Commitment problem or Security Contrained ACOPF (SCOPF), hence it would be interesting to evaluate its performance on these challenging problems.


\section{ACKNOWLEDGMENTS}

We would like to thank Erling Andersen, Bob Fourer, Ed Klotz and Richard Waltz.  This work was supported by an ARPA-E GO award.

\section{APPENDIX}\label{appendix}

\subsection{i2 SOCP strictly stronger than Jabr SOCP}\label{appendix:i2strongerjabr}

An instance where the i2 SOCP is strictly stronger than the Jabr SOCP is case1354pegase. Knitro attains an optimal solution to the Jabr SOCP of value 74009.28 while an optimal solution to the i2 SOCP has value 74013.68. As a sanity check, we fixed, within tolerance $\pm 10^{-5}$, the solution to the Jabr SOCP in the i2 SOCP's formulation, and Gurobi declared the SOCP infeasible and provided the following Irreducible Inconsistent Subsystem (IIS):

\begin{align*}
      i^{(2)}_{549,5002} &= 11822.45384038167 \, v^{(2)}_{549} \\
    &+ 11822.45384038167 \, v^{(2)}_{5002} \\ 
    &- 23644.8888107824 \, c_{549,5002} \\
    &- 29.87235441454166 \, s_{549,5002} \\
     v^{(2)}_{549}  &\geq 1.209989999283128 \\
     v^{(2)}_{5002} &\geq  1.19745626014781 \\
    c_{549,5002} &\leq 1.195643087927643 \\
    s_{549,5002} &\leq 0.0246578041355137 \\
     i^{(2)}_{549,5002} &\leq 39.69
\end{align*}

\subsection{i2 definition}\label{appendix:i2def}

Let the admittance matrix for line $\{km\}$ be
\begin{equation*}
    Y := \begin{pmatrix}
        \left(y + \frac{y^{sh}}{2}\right)\frac{1}{\tau^{2}} & -y \frac{1}{\tau e^{-j\sigma}} \\
        -y \frac{1}{\tau e^{j\sigma}} & y + \frac{y^{sh}}{2}
        \end{pmatrix} 
        = G + j B
\end{equation*}
where $y = g + jb $ denotes its series admittance, shunt admittance is denoted by $y^{sh} = g^{sh} + j b^{sh}$, and $N := \tau e^{j\sigma}$ denotes the transformer ratio of magnitude $\tau > 0$ and phase shift angle $\sigma$. By Ohm's Law the current flowing from bus $k$ to $m$ is given by $I_{km}
= \frac{y}{\tau}\left( \frac{1}{\tau}V_{k} - e^{j \sigma} V_{m} \right) + V_{k} \frac{y^{sh}}{2\tau^{2}}$. It can be shown (c.f.~\cite{cc}) that $i^{2}_{km} := |I_{km}|^{2}$ can be written as
\begin{align}
    i^{2}_{km} &= \alpha_{km} v_{k}^{(2)} + \beta_{km} v_{m}^{(2)} + \gamma_{km} c_{km} + \zeta_{km} s_{km}, \label{appendix:i2def_eq}
\end{align}
where 
\begin{align*}
    \alpha_{km} :&= \frac{1}{\tau^{4}} \left( (g^{2} + b^{2}) + (gg^{sh} + bb^{sh}) + \frac{(g^{sh2} + b^{sh2})}{4} \right) \\
    \beta_{km} :&= \frac{(g^{2} + b^{2})}{\tau^{2}} \\
    \gamma_{km} :&= \frac{1}{\tau^{3}} \cos(\sigma)( -2 (g^{2} + b^{2}) - (gg^{sh} + bb^{sh}) )  \\
    &+ \frac{1}{\tau^{3}} \sin(\sigma)(bg^{sh} - gb^{sh})\\
    \zeta_{km} :&= \frac{1}{\tau^{3}} \sin(\sigma)( -2 (g^{2} + b^{2}) - (gg^{sh} + bb^{sh}) ) \\
    &- \frac{1}{\tau^{3}}\cos(\sigma)(bg^{sh} - gb^{sh}).
\end{align*}

\subsection{Proof of Proposition~\ref{prop:jabrouter}}\label{appendix:jabrouter}
By \eqref{eq:rotatedrewrite} we have that the Jabr inequality $c_{km}^{2} + s_{km}^{2} \leq v_{k}^{(2)} v_{m}^{(2)}$ can be written as $||(2 c_{km}, 2s_{km}, v_{k}^{(2)} - v_{m}^{(2)} )||_{2} \leq v_{k}^{(2)} + v_{m}^{(2)}$. Hence, taking $\lambda = (1,0,0)^{\top}$, and using \eqref{eq:genericouter} we have
\begin{equation}\label{loss:3}
    v_{k}^{(2)} + v_{m}^{(2)} - 2c_{km} \geq 0.
\end{equation}
On the other hand, since by summing up equations \eqref{SOCP:def_from_activepower} and \eqref{SOCP:def_to_activepower} we have $P_{km} + P_{mk} = G_{kk} v_{k}^{(2)} + G_{mm} v_{m}^{(2)} + 2 G_{km} c_{km}$, which can be lower bounded by $\min \{G_{kk},G_{mm}\} (v_{k}^{(2)} + v_{m}^{(2)} - 2 c_{km})$, the inequality \eqref{loss:3} implies $P_{km} + P_{mk} \geq 0$.

\subsection{Proof of Proposition~\ref{proposition:projection}}\label{appendix:proofprojection}
We solve the following convex QCQP analitically:
\begin{align*}
\min \frac{1}{2} || (x,s) - (\overline{x},\overline{s}) ||^{2} \hspace{2em} \text{s.t.} \hspace{2em} ||x||^{2} \leq s^2.
\end{align*}
For a multiplier $\lambda \in \mathbb{R}_{+}$, consider the Lagrangean $L:= \frac{1}{2} || (x,s) - (\overline{x},\overline{s}) ||^{2} + \lambda (||x||^{2} -s^2)$. The first order conditions are $x-\overline{x} + 2x \lambda = 0$ and $s - \overline{s} - 2 s  \lambda = 0$. Then the second FOC implies $s > 0$ (since we assumed $\overline{s}>0$). Therefore, $\lambda = \frac{(s - \overline{s})}{2s} > 0$, and by substituting $\lambda$ in the first FOC we obtain $x = \frac{s}{2s - \overline{s}} \overline{x}$, where $2s - \overline{s}>0$ holds since $\lambda > 0$. Since the projection must be on the boundary of the closed cone $C$ we have that $(x,s)$ satisfies $||x||^{2} = s^{2} \iff ||\overline{x}|| = 2s - \overline{s}$, i.e., $s_{0} = \frac{||\overline{x}|| + \overline{s}}{2}$. Therefore, $x_{0} = \frac{||\overline{x}|| + \overline{s}}{2} \frac{\overline{x}}{||\overline{x}||}$.

The normal vector of the supporting hyperplane of $C$ at $(x_{0},s_{0})$ is given by
\begin{align*}
(\overline{x},\overline{s}) - (x_{0},s_{0}) &= \left( x' \left( 1 - \frac{s_{0}}{||\overline{x}||} \right), \frac{\overline{s}-||\overline{x}||}{2} \right) \\
&= \left( \frac{||\overline{x}|| - \overline{s}}{2} \right) \left( \frac{\overline{x}}{||\overline{x}||}, -1 \right)
\end{align*}
Finally, by strong duality the separating hyperplane of interest is $a^{t} x + b s \leq 0$, hence $\overline{x}^{t} x  - s || \overline{x} || \leq 0$.

\subsection{Proof of Proposition~\ref{prop:thermalcuts}}\label{appendix:proofthermalcuts}
Since $(x')^{2} + (y')^{2} > r^{2}$, there exists some $0 < t_{0} < 1$ such that $(t_{0} x')^{2} + (t_{0} y')^{2} = r^{2}$. It can be readily checked that $t_{0}(x',y')$ is the projection of $(x',y')$ onto $S_{r}$. Therefore, the normal vector of the separating hyperplane is $(1-t_{0})(x',y')$ and the RHS is $(1-t_{0})t_{0}||(x',y')||_{2}^{2}$, in other words, $(x')x + (y')y \leq r ||(x',y')||_{2}$ is the desired valid inequality since $0<t_{0}<1$.

\tiny Sun.Feb..4.182013.2024

\end{document}